\documentclass[12pt]{article}

\usepackage{color}
\usepackage{graphics,amsmath,amssymb}
\usepackage{amsthm}
\usepackage{amsfonts}
\usepackage{latexsym}
\usepackage{url}
\usepackage{float}
\usepackage{epsf}
\usepackage[linkcolor=webgreen,filecolor=webbrown,citecolor=webgreen]{hyperref}
\usepackage[all,knot]{xy}
\xyoption{arc}

\setlength{\textwidth}{6.5in}
\setlength{\oddsidemargin}{.1in}
\setlength{\evensidemargin}{.1in}
\setlength{\topmargin}{-.5in}
\setlength{\textheight}{8.9in}

\graphicspath{{figures/}}

\newtheorem{theorem}{Theorem}

\newtheorem{lemma}{Lemma}
\newtheorem{conjecture}{Conjecture}
\newtheorem{corollary}{Corollary}

\newcommand{\lr}{\left(}
\newcommand{\rr}{\right)}
\begin{document}
\def\lr{\left(}
\def\lc{\left\{}
\def\rc{\right\}}
\def\rr{\right)}
\def\op{\text{op*}_{n,k}(\rho)}
\def\opmn{\text{op*}_{m+n,k}(\rho)}
\def\opm{\text{op*}_{m,k}(\rho)}
\def\oop{\text{op}_{n,k}(\rho)}
\def\oopn{\text{op}_{n,k}^{1/n}(\rho)}
\def\opn{\text{op*}_{n,k}^{1/n}(\rho)}

\begin{center}
\vskip 1cm{\LARGE\bf Pattern Avoidance in Ordered Set Partitions
}
\vskip 1cm
\large
Anant Godbole\footnote{Research of the first and third authors partially funded by NSF grant number DMS-1004624.}\\
Department of Mathematics and Statistics\\
East Tennessee State University\\
Johnson City, TN 37614, USA \\
\href{godbolea@etsu.edu}{\tt godbolea@etsu.edu}
\ \\
Adam Goyt\\ 
Department of Mathematics\\
Minnesota State University\\
Moorhead, MN 56563 USA\\
\href{mailto:goytadam@mnstate.edu}{\tt goytadam@mnstate.edu}
\ \\
Jennifer Herdan\footnotemark[1]\\
Department of Mathematics and Statistics \\
Winona State University\\
Winona, MN 55987, USA
\ \\
Lara Pudwell  \\
Department of Mathematics and Computer Science\\
Valparaiso University\\
Valparaiso, IN 46383, USA\\
\href{mailto:Lara.Pudwell@valpo.edu}{\tt Lara.Pudwell@valpo.edu}\\
\end{center}

\begin{abstract}
In this paper we consider the enumeration of ordered set partitions avoiding a permutation pattern of length 2 or 3.  We provide an exact enumeration for avoiding the permutation 12.  We also give exact enumeration for ordered partitions with 3 blocks and ordered partitions with n-1 blocks avoiding a permutation of length 3. We use enumeration schemes to recursively enumerate 123-avoiding ordered partitions with any block sizes.  Finally, we give some asymptotic results for the growth rates of the number of ordered set partitions avoiding a single pattern; including a Stanley-Wilf type that exhibits existence of such growth rates.
\end{abstract}

Keywords: ordered set partitions, pattern avoidance, permutations, enumeration schemes, growth rates

AMS Subject Classification: 05A05, 05A18, 05A16

\section{Introduction}\label{S:Intro}

Pattern avoidance in permutations was first introduced by Knuth \cite{K73}, and continues to be an active area of research today.  Let $\mathcal{S}_n$ denote the set of permutations of length $n$, and consider $\pi \in \mathcal{S}_n$ and $\rho \in \mathcal{S}_m$.  We say $\pi$ \emph{contains} $\rho$ as a pattern if there exist indices $1 \leq i_1 < i_2 < \cdots < i_{m-1} < i_m \leq n$ such that $\pi_{i_a} \leq \pi_{i_b}$ if and only if $\rho_a \leq \rho_b$.  In this case, we say that $\pi_{i_1}\pi_{i_2}\cdots\pi_{i_m}$ is \emph{order-isomorphic} to $\rho$.  Otherwise, we say $\pi$ \emph{avoids} $\rho$.  Further, let $\mathcal{S}_n(\rho)$ denote the set of permutations of length $n$ that avoid $\rho$, and let $\text{s}_n(\rho) = \left|\mathcal{S}_n(\rho)\right|$.  It is straightforward to see that $\text{s}_n(12) = 1$ for $n \geq 0$ because the only permutation of length $n$ that avoids 12 is the decreasing permutation.  It is also well-known that given any $\rho \in \mathcal{S}_3$, $\text{s}_n(\rho)=C_n$ where $C_n = \dfrac{\binom{2n}{n}}{n+1}$ is the $n$th Catalan number \cite{SS85}.

Pattern avoidance has been studied in contexts other than permutations.  In particular, the notion of pattern-avoidance in set partitions was introduced by Klazar \cite{K96}, with further work done by Klazar, Goyt, and Sagan \cite{G08, GS09, K00, K00b, Sagan10}.  More recently, Goyt and Pudwell introduced the notion of colored set partitions and considered three distinct types of pattern avoidance in this context \cite{GPTBA1, GPTBA2}.  In this paper, we consider a definition of pattern avoidance most closely related to that of \cite{GPTBA2}. 

A partition $p$ of the set $S \subseteq \mathbb{Z}$, written $p \vdash S$, is a family of nonempty, pairwise disjoint subsets $B_1, B_2, \dots , B_k$ of $S$ called \emph{blocks} such that $\displaystyle{\cup_{i=1}^k B_i = S}$.  We write $p = B_1/B_2/\dots/B_k$ and define the {\it length} of $p$, written $\ell(p)$, to be the number of blocks.  Note that because $B_1, \dots , B_k$ are sets, the order of elements within a block does not matter; for convenience we will write elements of a block in increasing order.  We are particularly interested in the set of \emph{ordered partitions of $[n]=\{1,\dots , n\}$ into $k$ blocks}, written $\mathcal{OP}_{n,k}$, which is the set of partitions $p$ such that $p \vdash [n]$, $\ell(p)=k$, and where the order of blocks is important.  For example $13/2/4$ and $4/13/2$ are two distinct members of $\mathcal{OP}_{4,3}$.  In the sequel, we also let $\displaystyle{\mathcal{OP}_n = \cup_{k=1}^n \mathcal{OP}_{n,k}}$, and let $\mathcal{OP}_{[b_1,\dots,b_k]}$ be the set of ordered partitions $p$ such that $p \vdash [b_1 + \cdots + b_k]$, $\ell(p)=k$ and $\left|B_i\right| = b_i$ for $1 \leq i \leq k$.  Similarly, we let $\text{op}_{n,k} = \left|\mathcal{OP}_{n,k}\right|$, $\text{op}_{n} = \left|\mathcal{OP}_{n}\right|$, and $\text{op}_{[b_1,\dots,b_k]} = \left|\mathcal{OP}_{[b_1,\dots,b_k]}\right|$

Given a partition $p \vdash [n]$, and a permutation $\rho \in \mathcal{S}_m$, we say that $p$ contains $\rho$ if there exist blocks $B_{i_1}, \dots, B_{i_m}$ where $i_1 < i_2 < \cdots < i_m$, and there exists $b_j \in B_{i_j}$ such that $b_{1}\cdots b_m$ is order-isomorphic to $\rho$.  For example, $14/56/2/3 \in \mathcal{OP}_{6,4}$ contains the pattern $\rho = 312$, as evidenced by $b_1=4$, $b_2=2$, and $b_3=3$.

Avoidance in ordered partitions is attractive in that three special cases are directly related to other known enumerative results.  First, note that for any $\rho \in \mathcal{S}_m$, we have $\text{s}_n(\rho) = \text{op}_{[\underbrace{1,\dots,1}_{n}]}(\rho)$.  That is, a permutation is equivalent to an ordered partition where all blocks are of size 1.  Second,  we will see in the next section that $\mathcal{OP}_{n,k}(12)$ is in bijection with the set of integer compositions of $n$ into $k$ parts.  Finally, the definition of avoidance for ordered partitions described above corresponds to the pattern-type avoidance detailed in \cite{GPTBA2} if we considered unordered partitions where all elements in block $i$ are given color $c_i$, and avoid the colored pattern $\rho_1^1\rho_2^2\cdots \rho_m^m$ rather than pattern $\rho$.

In Section \ref{S:simple}, we give closed formulas for $\text{op}_{n,k}(12)$, $\text{op}_{n,3}(123)$, and $\text{op}_{n,3}(132)$.  In Section \ref{S:n-1}, we give closed formulas for $\text{op}_{n,n-1}(123)$ and generalize this to give a closed form for $\text{op}_{[b_1,b_2,\dots,b_k]}$ where $b_i=1$ for at least $k-1$ values of $i$.  In Section \ref{S:bij}, we give a bijective proof that $\text{op}_{[b_1,\dots,b_k]}(123) = \text{op}_{[b_1,\dots,b_k]}(132)$ for any list of positive integers $b_1, \dots, b_k$, settling the question of whether $\text{op}_{[b_1,\dots,b_k]}(\rho_1) = \text{op}_{[b_1,\dots,b_k]}(\rho_2)$ for any patterns $\rho_1, \rho_2 \in \mathcal{S}_3$.  In Section \ref{S:enumerate}, we adapt the enumeration schemes of Zeilberger \cite{Z98}, Vatter \cite{V08}, and Pudwell \cite{P10} to enumerate the set $\mathcal{OP}_{[b_1,\dots,b_k]}(123)$ for any block sizes $b_1,\dots,b_k$.  In Section 6 we investigate the case of $\text{op}_{[2,2,...2]}(123)$ and continue with a monotonicity result in Section 7.  Then, in Section 8, we prove that a Stanley-Wilf limit exists as $n$ tends to infinity for all the sequences $\text{op}_{n,k}(\rho)$ where $k$ and $\rho$ are held fixed.  We end with some open questions.

\section{A few simple cases}\label{S:simple}

In this section, we consider a few special cases of the pattern avoidance problem introduced in the introduction.  In particular, we consider $\text{op}_{n,k}(12)$, $\text{op}_{n,3}(123)$, $\text{op}_{n,3}(132)$, and $\text{op}_{n,n-1}(132)$.  The simplest of these cases is that of avoiding the pattern $\rho=12$.

\begin{theorem}
$$\operatorname{op}_{n,k}(12) = \binom{n-1}{k-1}$$
\end{theorem}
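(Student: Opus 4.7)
The plan is to establish a bijection between $\mathcal{OP}_{n,k}(12)$ and the set of compositions of $n$ into $k$ positive parts, and then apply the standard stars-and-bars count. This was already hinted at in the introduction, where it is noted that ordered partitions avoiding $12$ correspond to integer compositions.

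First I would unpack what it means for $p = B_1/B_2/\cdots/B_k \in \mathcal{OP}_{n,k}$ to avoid $12$. By the definition of containment, $p$ contains $12$ iff there exist indices $i_1 < i_2$ and elements $b_1 \in B_{i_1}$, $b_2 \in B_{i_2}$ with $b_1 < b_2$. Hence $p$ avoids $12$ precisely when every element of an earlier block exceeds every element of a later block, i.e.\ $\min(B_i) > \max(B_{i+1})$ for all $1 \le i \le k-1$.

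From this constraint, I would observe that the block contents are forced once the block sizes are chosen: $B_1$ must consist of the largest $|B_1|$ elements of $[n]$, $B_2$ must consist of the next $|B_2|$ largest, and so on. Consequently the map $p \mapsto (|B_1|, |B_2|, \ldots, |B_k|)$ is a bijection from $\mathcal{OP}_{n,k}(12)$ onto the set of compositions of $n$ into $k$ positive parts. The last step is to recall that the number of such compositions equals $\binom{n-1}{k-1}$, which follows from the standard stars-and-bars argument of choosing $k-1$ dividers among the $n-1$ gaps between consecutive integers $1, 2, \ldots, n$.

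There is no real obstacle here; the whole content of the theorem is the observation that the avoidance condition rigidly determines the block contents from the block sizes. Once that is in hand, the count is immediate.
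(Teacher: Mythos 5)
Your proposal is correct and follows exactly the paper's argument: the $12$-avoidance condition forces each block to consist of a consecutive run of the largest remaining elements, so the partition is determined by its composition of block sizes, giving the count $\binom{n-1}{k-1}$. No differences worth noting.
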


\begin{proof}
Notice that a member of $\mathcal{OP}_{n,k}$ avoids the pattern $\rho=12$ if and only if for each block $B_i$, all elements of each block $B_j$ where $j>i$ are strictly less than all elements of block $B_i$.  Once we know the sizes of blocks $B_1, B_2,\dots , B_k$, the 12-avoiding partition is determined: the largest $\left|B_1\right|$ elements are in block 1, the next largest $\left|B_2\right|$ elements are in block 2, and so on.  Thus, $\text{op}_{n,k}(12)$ is merely the number of integer compositions of $n$ into $k$ parts, which is well known to be $\binom{n-1}{k-1}$.
\end{proof}

Note that a similar argument shows that $\text{op}_{n,k}(21) = \binom{n-1}{k-1}$.  In fact, as with pattern-avoiding permutations, a few natural symmetries simplify our work.  Given permutation pattern $\rho=\rho_1\cdots\rho_m$, define the \emph{reversal} of $\rho$, written $\rho^r$, to be $\rho^r = \rho_m\rho_{m-1}\cdots \rho_2\rho_1$, and define the \emph{complement} of $\rho$, written $\rho^c$, to be $\rho^c = (m+1-\rho_1)(m+1-\rho_2)\cdots(m+1-\rho_m)$.  Since complement and reversal both provide involutions on the set of permutations and on the set $\mathcal{OP}_{n,k}$, we have that $\text{op}_{n,k}(\rho) = \text{op}_{n,k}(\rho^r) = \text{op}_{n,k}(\rho^c)=\text{op}_{n,k}((\rho^r)^c)$.  Unlike the case of pattern-avoiding permutations, we no longer have a well-defined notion of inverse, so reversal and complementation are the only natural symmetries of which we may take advantage.

For the case of patterns of length 3, we see that $123^r = 321$, $132^r = 231$, $231^c = 213$, and $213^r=312$, so we have that $\text{op}_{n,k}(123)=\text{op}_{n,k}(321)$ and $\text{op}_{n,k}(132)=\text{op}_{n,k}(231)=\text{op}_{n,k}(213)=\text{op}_{n,k}(312)$.  In Section \ref{S:bij} we will show that $\text{op}_{n,k}(123) = \text{op}_{n,k}(132)$ by demonstrating the stronger claim that $\text{op}_{[b_1,\dots,b_k]}(123)=\text{op}_{[b_1,\dots,b_k]}(132)$ for any choice of block sizes $b_1, b_2, \dots, b_k$.  For now, though, we consider the special case where $k=3$.

\begin{theorem}
$$\operatorname{op}_{n,3}(123) = \left(\dfrac{n^2}{8}+\dfrac{3n}{8}-2\right)2^n+3$$
\label{T:123_3blocks}
\end{theorem}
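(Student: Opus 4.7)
The plan is to bijectively encode each ordered partition in $\mathcal{OP}_{n,3}$ as a surjective word $w \in \{1,2,3\}^n$ by setting $w_i = j$ whenever $i \in B_j$. Under this encoding, $p$ contains the pattern $123$ iff $w$ has indices $i<j<k$ with $w_i=1$, $w_j=2$, $w_k=3$. So the task reduces to counting surjective length-$n$ words over $\{1,2,3\}$ that avoid this symbolic ``$123$'' subsequence.

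First I would drop the surjectivity requirement, count all $123$-avoiding words in $\{1,2,3\}^n$, and then fix surjectivity via inclusion--exclusion. The key is to condition on $p$, the rightmost position at which the letter $3$ appears (with a separate case for ``no $3$''). Given such a $p$, the suffix $w_{p+1}\cdots w_n$ is a free word over $\{1,2\}$, giving $2^{n-p}$ choices, and because $p$ carries the \emph{rightmost} $3$, the $123$-avoidance constraint on $w_1\cdots w_{p-1}$ collapses to avoidance of the $2$-letter subsequence ``$12$'': no $1$ in the prefix may be followed by a $2$. A length-$q$ word in $\{1,2,3\}^q$ with this property is specified by choosing the $s$ positions of the $3$'s and then splitting the remaining $q-s$ positions into a run of $2$'s followed by a run of $1$'s, giving
\[
\sum_{s=0}^{q}\binom{q}{s}(q-s+1) \;=\; q\cdot 2^{q-1}+2^q \;=\; (q+2)\,2^{q-1}
\]
prefixes; with $q=p-1$ this is $(p+1)2^{p-2}$.

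Next I would assemble the pieces. Summing $(p+1)2^{p-2}\cdot 2^{n-p}$ over $p=1,\dots,n$ gives
\[
2^{n-2}\sum_{p=1}^n(p+1) \;=\; n(n+3)\,2^{n-3},
\]
and adjoining the $2^n$ words that omit $3$ entirely yields $n(n+3)2^{n-3}+2^n$ total $123$-avoiding words. Since any word missing a letter from $\{1,2,3\}$ vacuously avoids $123$, each of the three ``missing-letter'' classes has size $2^n$, each pairwise intersection is a constant word contributing $1$, and the triple intersection is empty. Inclusion--exclusion then yields
\[
n(n+3)\,2^{n-3}+2^n-3\cdot 2^n+3 \;=\; (n^2+3n-16)\,2^{n-3}+3 \;=\; \Bigl(\tfrac{n^2}{8}+\tfrac{3n}{8}-2\Bigr)2^n+3,
\]
as claimed; the small cases $n=3,4$ (giving $5$ and $27$) provide a quick sanity check.

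The main obstacle will be justifying the prefix reformulation in the first step: verifying cleanly that, once the rightmost $3$ is fixed at position $p$, every potential $123$-pattern in $w$ must use that $3$ (or an earlier $3$, which is even more restrictive), so the three-letter condition on the prefix genuinely collapses to the two-letter ``no $1$ before a $2$'' condition. Once that reduction is in place, the remaining work --- the binomial identity $\sum_{s}\binom{q}{s}(q-s+1)=(q+2)2^{q-1}$, the sum over $p$, and the three-set inclusion--exclusion --- is routine bookkeeping.
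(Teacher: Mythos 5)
Your proof is correct, but it takes a genuinely different route from the paper's. The paper works directly with the block structure: it conditions on the block sizes $a,b,c$, on the smallest element $\ell$ of $B_1$, and on the number $i$ of elements of $B_2$ lying below $\ell$, arriving at a quadruple binomial sum which is then reduced to the closed form by ``standard algebraic techniques'' (the reduction is not displayed). You instead pass to the word encoding $w_i=j$ exactly when $i\in B_j$ --- which is precisely the inverse map the paper itself uses in Section~\ref{S:bij} to identify words in $[k]^n$ with ordered partitions having possibly empty blocks --- drop surjectivity, condition on the position of the rightmost letter $3$, and restore surjectivity by a three-set inclusion--exclusion. The one step you flag as needing care is indeed sound: every occurrence of the pattern must end at a position carrying a $3$, hence lies entirely within $w_1\cdots w_p$, and since $w_p=3$ the constraint on the prefix collapses to forbidding a $1$ followed by a $2$ in $w_1\cdots w_{p-1}$; your count $(q+2)2^{q-1}$ of such prefixes, the sum $n(n+3)2^{n-3}+2^n$ of unrestricted avoiders, and the final inclusion--exclusion all check out against the stated formula and the values $5$ and $27$ at $n=3,4$. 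What your approach buys is a fully explicit derivation, with every sum evaluated on the page, plus a conceptual link to the word-avoidance results of Section~\ref{S:bij}; what the paper's approach buys is a decomposition phrased directly in terms of partition structure, parallel to the argument it then gives for Theorem~\ref{T:132_3blocks}.
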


\begin{proof}
It is easy to see that  
\[\text{op}_{n,3}(123)=\sum_{a=1}^{n-2}\sum_{b=1}^{n-1-a}\sum_{\ell=1}^{n-a+1}\sum_{i=\max\{0,\ell-1-c\}}^{\min\{\ell-1,b\}}{{\ell-1}\choose{i}}{{n-\ell}\choose{a-1}},\]
where $a,b, c=n-a-b$ represent the sizes of blocks 1, 2, and 3 respectively, $\ell$ is the smallest number in block 1, and $i$ represents how many numbers smaller than $\ell$ are in block 2.  Using standard algebraic techniques the above sum above may be reduced to $\left(\dfrac{n^2}{8}+\dfrac{3n}{8}-2\right)2^n+3$.\end{proof}

Next, we consider $\text{op}_{n,3}(132)$.  We will show why $\text{op}_{n,3}(123)=\text{op}_{n,3}(132)$ via a bijection in the next section, but first, we consider how to count members of $\mathcal{OP}_{n,3}(132)$ directly.

\begin{theorem}
$$\operatorname{op}_{n,3}(132) = \left(\dfrac{n^2}{8}+\dfrac{3n}{8}-2\right)2^n+3$$
\label{T:132_3blocks}
\end{theorem}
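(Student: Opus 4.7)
The plan is to mirror the proof of Theorem \ref{T:123_3blocks}: derive an explicit quadruple sum for $\operatorname{op}_{n,3}(132)$ and show it is term-by-term identical to the 123 sum under a simple reindexing, so the same closed form drops out with no new algebra.

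First, I would establish a structural characterization: an ordered partition $B_1/B_2/B_3 \in \mathcal{OP}_{n,3}$ avoids $132$ if and only if every element of $B_3$ lies outside the open interval $(\min(B_1), \max(B_2))$. Indeed, any $b_3 \in B_3$ with $\min(B_1) < b_3 < \max(B_2)$ together with $b_1 = \min(B_1)$ and $b_2 = \max(B_2)$ would form a $132$ pattern; conversely, a purported pattern $b_1 < b_3 < b_2$ would force $b_3 > b_1 \geq \min(B_1)$, hence (by the assumed condition) $b_3 > \max(B_2)$, contradicting $b_2 > b_3$.

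Next, I would parameterize by $a, b$, and $c = n-a-b$ (the block sizes), by $\ell = \min(B_1)$, and by $j = |B_3 \cap \{1,\ldots,\ell-1\}|$. Given these data, the characterization pins down the rest of the structure: there are $\binom{\ell-1}{j}$ ways to pick which $j$ of the $\ell-1$ small elements go into $B_3$ (the other $\ell-1-j$ going into $B_2$), $\binom{n-\ell}{a-1}$ ways to fill out $B_1$ with elements from $\{\ell+1,\ldots,n\}$, and then the remaining elements of $\{\ell+1,\ldots,n\}\setminus B_1$ are forced --- the smallest $b-\ell+1+j$ enter $B_2$ and the largest $c-j$ enter $B_3$, so that $\max(B_2)$ is below the large portion of $B_3$. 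Summing yields
\[
\operatorname{op}_{n,3}(132) = \sum_{a=1}^{n-2}\sum_{b=1}^{n-1-a}\sum_{\ell=1}^{n-a+1}\sum_{j=\max\{0,\ell-1-b\}}^{\min\{\ell-1,c\}} \binom{\ell-1}{j}\binom{n-\ell}{a-1}.
\]

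Finally, the substitution $i = \ell-1-j$ turns $\binom{\ell-1}{j}$ into $\binom{\ell-1}{i}$ and rewrites the index range $j \in [\max\{0,\ell-1-b\},\min\{\ell-1,c\}]$ as $i \in [\max\{0,\ell-1-c\},\min\{\ell-1,b\}]$, so the sum becomes identical to the one evaluated in the proof of Theorem \ref{T:123_3blocks}. The same algebraic simplification then yields the stated closed form. The main obstacle is nailing down the structural characterization and verifying the range of $j$ carefully; everything else is bookkeeping plus reuse of the algebra already done for the 123 case.
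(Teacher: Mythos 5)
Your proof is correct, but it takes a genuinely different route from the paper's. The paper counts $\mathcal{OP}_{n,3}(132)$ directly by splitting into two cases according to whether the first two blocks contain a $12$ pattern; this yields a double sum plus a quintuple sum minus a correction term, which is then simplified algebraically to the closed form. You instead prove the structural characterization that $B_1/B_2/B_3$ avoids $132$ exactly when no element of $B_3$ lies strictly between $\min(B_1)$ and $\max(B_2)$, parameterize by the block sizes, $\ell=\min(B_1)$, and the number $j$ of elements of $B_3$ below $\ell$, and observe that the substitution $i=\ell-1-j$ (together with $\binom{\ell-1}{j}=\binom{\ell-1}{\ell-1-j}$) carries your quadruple sum term-by-term onto the sum already evaluated in the proof of Theorem~\ref{T:123_3blocks}, so no new algebra is needed. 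The delicate step you flagged does go through: once the $j$ small elements of $B_3$ and the $a-1$ large elements of $B_1$ are chosen, the split of $\{\ell+1,\dots,n\}\setminus B_1$ is forced, since any element of $B_3$ exceeding $\ell$ that is not above all of $B_2$ would sit in the forbidden interval. What your route buys is a refined, statistic-preserving identification of the two counting problems --- in effect a $k=3$ instance of the equality $\operatorname{op}_{[b_1,\dots,b_k]}(123)=\operatorname{op}_{[b_1,\dots,b_k]}(132)$ proved bijectively in Section~\ref{S:bij} --- at the price of depending on the evaluation done for the $123$ case; the paper's route instead supplies an independent verification of the closed form.
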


\begin{proof}
Consider $p \in \mathcal{OP}_{n,3}(132)$.  We have two cases: either $p$ avoids the pattern $12$ in the first two blocks, or $p$ contains the pattern $12$ in the first two blocks.

In the first case, there is no restriction on the values of elements in the third block, so we may choose $a$ elements ($1 \leq a \leq n-2$) to be members of $B_3$.  Since we assume that there is no $12$ pattern in the first two blocks, once we choose the number $b$ ($1 \leq b \leq n-a-1$) of elements of $B_2$, we know the smallest $b$ elements of $B_1$ and $B_2$ are in $B_2$, and the remaining elements are in $B_1$.  Thus, there are $\displaystyle{\sum_{a=1}^{n-2}\sum_{b=1}^{n-a-1}\binom{n}{a}}$ possible partitions of this form.

The second case is more complicated.  Since we know there is a 12 pattern in the first two blocks, let $i$ be the smallest element of $B_1$ that participates in a 12 pattern and let $j$ be the largest element of $B_2$ that participates in a $12$ pattern.  By definition of $i$ and $j$, we know that there are no elements of $B_1$ smaller than $i$ and there are no elements of $B_2$ larger than $j$.  Further, there are no elements of $B_3$ that are both greater than $i$ and less than $j$.  Let $a_1$ be the number of elements of $B_2$ that are greater than $i$ but less than $j$, and let $a_2$ be the number of elements of $B_2$ that are less than $i$.  Further, let $b$ be the number of elements of $B_1$ that are greater than $j$.  Once we have determined, $i$, $j$, $a_1$, $a_2$, and $b$, it remains to choose which $a_1+a_2$ elements appear in $B_2$ and which $b$ elements appear in $B_1$.  This yields $\displaystyle{\sum_{i=1}^{n-1}\sum_{j=i+1}^n \sum_{a_1=0}^{j-i-1}\sum_{a_2=0}^{i-1}\sum_{b=0}^{n-j} \binom{j-i-1}{a_1}\binom{i-1}{a_2}\binom{n-j}{b}-\sum_{i=1}^{n-1}\sum_{j=i+1}^{n} \sum_{a_1=0}^{j-i-1} \binom{j-i-1}{a_1}}$ possible ordered partitions.

Together, we have that 

\begin{align*}
\text{op}_{n,3}(132) &= \sum_{a=1}^{n-2}\sum_{b=1}^{n-a-1}\binom{n}{a}
\\ &+ \sum_{i=1}^{n-1}\sum_{j=i+1}^n \sum_{a_1=0}^{j-i-1}\sum_{a_2=0}^{i-1}\sum_{b=0}^{n-j} \binom{j-i-1}{a_1}\binom{i-1}{a_2}\binom{n-j}{b}\\&-\sum_{i=1}^{n-1}\sum_{j=i+1}^{n} \sum_{a_1=0}^{j-i-1} \binom{j-i-1}{a_1},
\end{align*}
and through standard algebraic manipulation, these sums simplify to $$\text{op}_{n,3}(132) = \left(\dfrac{n^2}{8}+\dfrac{3n}{8}-2\right)2^n+3,$$ as desired.
\end{proof}

\section{The Case of $n-1$ Blocks}\label{S:n-1}

Notice that having $k=3$ blocks is the minimum non-trivial number of blocks to consider when avoiding a pattern of length 3.  Similarly, as mentioned above, $k=n$ blocks is equivalent to considering pattern-avoiding permutations, so the maximum non-trivial number of blocks to consider is $k=n-1$.  In this section we determine $\text{op}_{n,n-1}(123)$, a result that we believe could be the starting point of much new work.  

\begin{theorem} \label{prodform} For $n\geq1$,
$$\text{op}_{n,n-1}(123) = \dfrac{3(n-1)^2\binom{2n-2}{n-1}}{n(n+1)}.$$
\end{theorem}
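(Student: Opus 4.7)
My plan is to parametrize each $p \in \mathcal{OP}_{n,n-1}$ by a pair $(\pi,k)$, where $\pi\in\mathcal{S}_n$ satisfies $\pi_k<\pi_{k+1}$: the doubleton block $B_k=\{\pi_k,\pi_{k+1}\}$ occupies position $k$, with the remaining singletons in block order matching the entries of $\pi$. A case analysis on which element of the doubleton participates in a hypothetical $123$-pattern shows that $p$ avoids $123$ if and only if both length-$(n{-}1)$ sequences $\pi^{(-)}$ and $\pi^{(+)}$, obtained by deleting $\pi_{k+1}$ or $\pi_k$ from $\pi$ respectively, avoid $123$ as sequences.

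I would then split the enumeration into two cases. \textbf{Case A:} $\pi \in \mathcal{S}_n(123)$. Here both deletions automatically avoid $123$, and the contribution equals $A_n := \sum_{\pi\in\mathcal{S}_n(123)}|\operatorname{Asc}(\pi)|$. I would establish the identity $A_n=(n-1)C_{n-1}$ via a bijection that exploits the structural fact that, for $\pi\in\mathcal{S}_n(123)$ with an ascent at $k$, the value $\pi_{k+1}$ is forced to be a right-to-left maximum of $\pi$, which pins down the reinsertion of a removed entry and matches $(\pi,k)$ with pairs $(\tau,j)\in\mathcal{S}_{n-1}(123)\times[n-1]$. \textbf{Case B:} $\pi$ contains $123$, yet both $\pi^{(-)}$ and $\pi^{(+)}$ avoid $123$. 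This forces every $123$-pattern in $\pi$ to use both positions $k$ and $k+1$, so each such pattern must have the ascent $(\pi_k,\pi_{k+1})$ as its first two or last two elements. Viewing $\pi$ as built from $\pi^{(-,+)}\in\mathcal{S}_{n-2}(123)$ (the result of deleting both positions) by consecutively inserting $\pi_k$ and $\pi_{k+1}$, I would translate the requirement that each single insertion preserves $123$-avoidance into a short list of local constraints near the insertion point (namely: values below $\pi_{k+1}$ to the left appear in decreasing order, values above $\pi_k$ to the right appear in decreasing order, plus ``non-straddling'' conditions across $\pi_k$ and $\pi_{k+1}$). A careful count of valid configurations yields $B_n = \frac{2(n-1)(n-2)C_{n-1}}{n+1}$.

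Combining contributions gives $A_n+B_n = (n-1)C_{n-1}\cdot\frac{n+1+2(n-2)}{n+1} = \frac{3(n-1)^2 C_{n-1}}{n+1}$, which equals $\frac{3(n-1)^2\binom{2n-2}{n-1}}{n(n+1)}$ since $C_{n-1}=\frac{1}{n}\binom{2n-2}{n-1}$. The main obstacle is the Case B count: the joint constraint that two closely related single insertions both preserve $123$-avoidance produces overlapping ``straddling'' conditions on $\pi^{(-,+)}$, and reducing the resulting multi-fold sum over insertion position and the choice of $(\pi_k,\pi_{k+1})$ relative to $\pi^{(-,+)}$ into the clean closed form requires deliberate case splitting together with Catalan-type binomial identities, in the spirit of the algebraic reduction used in the proof of Theorem~\ref{T:132_3blocks}.
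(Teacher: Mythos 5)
Your reduction of $\mathcal{OP}_{n,n-1}(123)$ to pairs $(\pi,k)$ with $\pi\in\mathcal{S}_n$ and $\pi_k<\pi_{k+1}$, together with the observation that the partition avoids $123$ exactly when both one-entry deletions $\pi^{(-)}$ and $\pi^{(+)}$ avoid $123$, is correct, and it is a genuinely different route from the paper's. (The paper first proves that $\operatorname{op}_{[b_1,\dots,b_k]}(123)$ is invariant under permuting the block sizes (Lemma \ref{blockbijection}), reduces to the shape $[2,1,\dots,1]$, and then counts by inserting one element above the first entry of a $123$-avoiding permutation of length $n-1$ using the Catalan-triangle numbers $c_{n,i}$; since such an insertion can never create a new $123$, the paper never has to confront your Case B at all.) Your intermediate targets are also numerically correct: I verified $A_n=\sum_{\pi\in\mathcal{S}_n(123)}\left|\operatorname{Asc}(\pi)\right|=(n-1)C_{n-1}$ and $B_n=\frac{2(n-1)(n-2)C_{n-1}}{n+1}$ for small $n$, and they do sum to the stated formula.

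As written, however, the argument has a genuine gap exactly where you flag ``the main obstacle'': the Case B count is asserted rather than derived. Knowing that every copy of $123$ in $\pi$ must use the ascent $(\pi_k,\pi_{k+1})$ as its first two or last two letters, and listing the local conditions under which a single insertion into $\pi^{(-,+)}$ preserves $123$-avoidance, does not yet yield the closed form $\frac{2(n-1)(n-2)C_{n-1}}{n+1}$; the joint constraint that both insertions preserve avoidance while the double insertion creates a pattern is a multi-parameter enumeration (over the base permutation in $\mathcal{S}_{n-2}(123)$, the insertion position, and the two inserted values) that is at least as hard as the theorem itself, and no reduction of it is supplied. The Case A identity $A_n=(n-1)C_{n-1}$ is likewise only sketched---the right-to-left-maximum observation is correct, but one still has to check that deletion and reinsertion give a bijection onto all of $\mathcal{S}_{n-1}(123)\times[n-1]$---though that step is standard and fillable. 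To complete the proof along your lines you would need to actually carry out the Case B enumeration; alternatively, proving the block-transposition invariance first (as the paper does) lets you slide the doubleton to the front, which makes Case B vacuous and is the cleaner way to finish.
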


Theorem \ref{prodform} is a corollary of Theorem \ref{recform}, which was predicted based on computation data and Zeilberger's Maple package \emph{Findrec}~\cite{z08A}.

\begin{theorem}\label{recform} We have
$$\text{op}_{n,n-1}(123) = \begin{cases}
1&\mbox{ if }n=2\mbox{, and}\\
\dfrac{(4n-6)(n-1)^2}{(n-2)^2(n+1)}\text{op}_{n-1,n-2}(123)&n>2
\end{cases}.$$
\end{theorem}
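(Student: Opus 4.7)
The plan is to reformulate the counting problem and then carry out an explicit enumeration.

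First, observe that an ordered partition in $\mathcal{OP}_{n,n-1}$ consists of one block of size $2$ and $n-2$ singletons. Concatenating the blocks (each written with its elements in increasing order) yields a permutation $\pi \in \mathcal{S}_n$ together with a position $p \in \{1,\ldots,n-1\}$ marking the 2-block, subject to the ascent condition $\pi_p < \pi_{p+1}$. I would prove that 123-avoidance of the partition is equivalent to requiring that \emph{both} length-$(n-1)$ sequences $\pi^{(p)}$ and $\pi^{(p+1)}$ (obtained by deleting the entry at position $p$ or $p+1$ from $\pi$) are 123-avoiding. The reason is that a forbidden 123 pattern in the partition corresponds precisely to a 123 pattern of $\pi$ using at most one of the positions $p, p+1$, while the only permitted 123 patterns of $\pi$ are those using \emph{both} (which are killed by either deletion).

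Next, for each fixed $p$ and each pair of values $a<b$ assigned to positions $p, p+1$, I would count ``skeletons''---sequences of length $n-2$ on the remaining $n-2$ positions using the remaining $n-2$ values---such that inserting either $a$ or $b$ at position $p$ produces a 123-avoiding length-$(n-1)$ sequence. Using the classical Catalan bijection between $\mathcal{S}_{n-1}(123)$ and Dyck paths, I would describe the set of ``valid insertion points'' (for a given value) as a set of lattice cells, and then count skeletons for which the insertion points for $a$ and $b$ both coincide with $p$. Summing over $p$, $a$, $b$ and applying standard Catalan identities should yield the closed form $\text{op}_{n,n-1}(123) = \frac{3(n-1)^2\binom{2n-2}{n-1}}{n(n+1)}$; the recursion of Theorem~\ref{recform} then follows from the direct ratio computation
\[
\frac{\text{op}_{n,n-1}(123)}{\text{op}_{n-1,n-2}(123)} = \frac{(4n-6)(n-1)^2}{(n-2)^2(n+1)}.
\]

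The main obstacle is the ``compatible insertion'' count: requiring both the smaller value $a$ and the larger value $b$ to be simultaneously insertable at the common position $p$ couples the two values in an asymmetric way, since a 123 pattern created by inserting $a$ need not be created by inserting $b$, and vice versa. If this coupling admits a clean Catalan description (for example, via left and right factors around a distinguished peak of a common Dyck path), the subsequent algebraic simplification is routine. Otherwise, one would fall back on setting up an explicit triple sum over $(p,a,b)$ and verifying the recursion by creative telescoping, which is essentially how the recursion was first discovered via Zeilberger's \emph{Findrec} package.
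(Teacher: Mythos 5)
Your reformulation is correct: writing the doubleton's elements in increasing order at positions $p,p+1$ of a permutation $\pi\in\mathcal{S}_n$ with $\pi_p<\pi_{p+1}$, the partition avoids $123$ exactly when every occurrence of $123$ in $\pi$ uses both positions $p$ and $p+1$, which is equivalent to both deletions $\pi^{(p)}$ and $\pi^{(p+1)}$ avoiding $123$; and the final step (deriving the recursion from the closed form by a ratio computation) is exactly what the paper does. But there is a genuine gap where the enumeration should be. The number of pairs $(\pi,p)$ satisfying your double-deletion condition \emph{is} the quantity to be determined, and your argument never produces it: you hope for ``a clean Catalan description'' of the compatible-insertion sets, explicitly identify the obstacle (the asymmetric coupling between the two values $a<b$ insertable at a common position), and then neither resolve it nor set up the fallback triple sum concretely enough that creative telescoping could be applied. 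Nothing in the proposal actually yields $\frac{3(n-1)^2\binom{2n-2}{n-1}}{n(n+1)}$, so the theorem is not proved.

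The paper sidesteps the coupling problem entirely, and the way it does so is the ingredient your approach is missing. Lemma \ref{blockbijection} shows that permuting the list of block sizes does not change the number of $123$-avoiders; hence the position of the doubleton is irrelevant and $\text{op}_{n,n-1}(123)=(n-1)\,\text{op}_{[2,1,\dots,1]}(123)$ (Corollary \ref{ntimes}). Once the doubleton is moved to the front, the count becomes easy: a $123$-avoiding partition of shape $[2,1,\dots,1]$ arises uniquely from a $123$-avoiding permutation of length $n-1$ beginning with $i$ by inserting one of the $n-i$ values above $i$ into the first block (such an insertion cannot create a $123$, since the inserted element could only play the role of the ``1'' and then $i$ itself would already have been a ``1''; conversely, deleting the larger element of the doubleton cannot destroy every copy of $123$, since the smaller element can replace it). This gives $\sum_{i=1}^{n-1}(n-i)c_{n-1,i}$ with $c_{n,i}$ a ballot number (Lemma \ref{catalantriangle}), which sums to the closed form. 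In your language, Lemma \ref{blockbijection} is precisely the statement that your count over $(\pi,p)$ is independent of $p$, which would let you fix $p=1$, where the double-deletion condition degenerates into something tractable; without that lemma (or a completed Dyck-path computation), the proof is incomplete.
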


We will prove Theorem \ref{recform} by a series of lemmas.  In particular:

\begin{lemma}\label{blockbijection}
If the list $[c_1,\dots,c_k]$ is a permutation of the list $[b_1,\dots, b_k]$ then $\text{op}_{[b_1,\dots, b_k]}(123) = \text{op}_{[c_1,\dots, c_k]}(123)$.
\end{lemma}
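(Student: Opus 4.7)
The plan is to reduce the lemma to an adjacent transposition of block sizes: since adjacent transpositions generate the symmetric group $S_k$, it suffices to prove that for each $i\in\{1,\dots,k-1\}$,
\[
\operatorname{op}_{[\dots,b_i,b_{i+1},\dots]}(123)=\operatorname{op}_{[\dots,b_{i+1},b_i,\dots]}(123).
\]
I would do this by fixing a choice of the blocks $B_j$ for $j\notin\{i,i+1\}$, writing $S:=B_i\cup B_{i+1}$, and showing that the number of admissible splits of $S$ into $(B_i,B_{i+1})$ of the prescribed inner sizes depends only on the multiset $\{b_i,b_{i+1}\}$.

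The key step is to classify 123-patterns by how many of $\{B_i,B_{i+1}\}$ they involve. Patterns involving neither are already decided by the fixed blocks. Patterns involving exactly one of $B_i,B_{i+1}$ come in three positional sub-types (our block as the smallest, middle, or largest of the triple), and a short check of the admissible index ranges for the auxiliary blocks shows that $B_i$ and $B_{i+1}$ are forbidden from the \emph{same} subset of $[n]$; thus both must lie in a common allowed set $A$, and admissibility requires $S\subseteq A$. The only remaining (genuinely coupled) conditions come from triples $(B_l,B_i,B_{i+1})$ with $l<i$ and $(B_i,B_{i+1},B_m)$ with $m>i+1$. Setting $\lambda_L:=\min\bigcup_{l<i}B_l$ and $\mu_R:=\max\bigcup_{m>i+1}B_m$ (with the obvious conventions when either union is empty), these reduce to the requirement that the elements of $B_{i+1}$ lying in $S\cap(\lambda_L,n]$ and in $S\cap[1,\mu_R)$ each form a prefix, in sorted order, of that subset.

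An admissible split is then parametrised by two prefix sizes together with an arbitrary assignment of the ``free'' elements of $S$ (those outside both constrained ranges), so the number of admissible splits with $|B_{i+1}|=s$ is an explicit sum of binomial coefficients. The involution that replaces each prefix by its complement within its range and complements the free subset carries splits with $|B_{i+1}|=s$ to splits with $|B_{i+1}|=|S|-s$; since $|S|-b_{i+1}=b_i$, this is the desired local symmetry for each fixed choice of the other blocks. Summing yields the adjacent-swap identity, and composing adjacent swaps proves the lemma. The main obstacle will be the bookkeeping in the ``interacting'' subcase $\lambda_L<\mu_R$ with $S\cap(\lambda_L,\mu_R)\neq\emptyset$: there the two prefix conditions overlap, and one must verify that together they force a unique admissible split of each size (so the involution becomes the trivial $1=1$).
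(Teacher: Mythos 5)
Your proposal is correct in outline but takes a genuinely different route from the paper's proof. Both arguments reduce the lemma to an adjacent transposition of block sizes, but from there they diverge: the paper constructs an explicit bijection $f$ by classifying each element of $B_{i+1}$ according to whether it plays the role of a ``1'' or a ``2'' in a \emph{critical} occurrence of $12$ with an element in a block other than $B_i,B_{i+1}$, and then shows there is a unique way to redistribute $B_i\cup B_{i+1}$ into blocks of sizes $b_{i+1},b_i$ that preserves all critical occurrences and still avoids $123$. You instead fix everything outside $S=B_i\cup B_{i+1}$ and count admissible splits. The two arguments rest on the same structural fact: an element of $B_{i+1}$ playing a ``2'' (resp.\ a ``1'') in a critical occurrence is precisely an element of $S\cap(\lambda_L,n]$ (resp.\ $S\cap[1,\mu_R)$), so the paper's parameters $a$ and $b$ are essentially your two prefix sizes. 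Your decomposition of the constraints is sound: patterns meeting exactly one of the two blocks forbid the same values from either position, the two coupled families of triples reduce to the extremal witnesses $\lambda_L$ and $\mu_R$, and in the overlapping subcase the two prefix conditions do force $B_{i+1}$ to be the unique initial segment of $S$ of the prescribed size, giving the $1=1$ you predict. What your version buys is a transparent reason \emph{why} the two counts agree (a size-complementation symmetry of a parametrized family); what the paper's version buys is a canonical bijection that fixes the rest of the partition, rather than only an equality of cardinalities.

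One slip needs repair before this is a proof: as literally stated, your involution replaces a prefix of $S\cap(\lambda_L,n]$ by its set-theoretic complement in that range, which is a \emph{suffix} and therefore violates the very admissibility condition you derived. The involution must act on the parameters instead, sending the prefix of size $s_1$ to the prefix of size $\left|S\cap(\lambda_L,n]\right|-s_1$ (and likewise on the other range), while genuinely complementing only the free subset. With that reading the arithmetic $s\mapsto|S|-s$ goes through in the case of disjoint ranges, the overlapping case is the trivial bijection between singletons, and the lemma follows.
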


By Lemma \ref{blockbijection}, then we have the following Corollary.

\begin{corollary}\label{ntimes} For $n\geq 2$, 
$\text{op}_{n,n-1}(123) = (n-1)\text{op}_{[2,\footnotesize\underbrace{1,\dots,1}_{n-2}]}(123)$.
\end{corollary}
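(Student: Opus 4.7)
The plan is to split the set $\mathcal{OP}_{n,n-1}(123)$ according to which block has size $2$, and then apply Lemma~\ref{blockbijection} to each piece.

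First I would observe that since $[n]$ is partitioned into $n-1$ nonempty blocks, a simple pigeonhole argument forces the block sizes to be a permutation of $[2, \underbrace{1,\dots,1}_{n-2}]$: exactly one block has size $2$ and the remaining $n-2$ blocks are singletons. Let $i$ denote the index of the size-$2$ block; then $i$ ranges over $\{1, 2, \dots, n-1\}$, and these cases are disjoint and exhaustive. Thus
\begin{equation*}
\text{op}_{n,n-1}(123) \;=\; \sum_{i=1}^{n-1} \text{op}_{[b_1^{(i)}, \dots, b_{n-1}^{(i)}]}(123),
\end{equation*}
where $(b_1^{(i)}, \dots, b_{n-1}^{(i)})$ is the composition of $n$ with a $2$ in position $i$ and $1$'s elsewhere.

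Next I would apply Lemma~\ref{blockbijection}: since each tuple $(b_1^{(i)}, \dots, b_{n-1}^{(i)})$ is a permutation of $[2,1,\dots,1]$, we have
\begin{equation*}
\text{op}_{[b_1^{(i)}, \dots, b_{n-1}^{(i)}]}(123) \;=\; \text{op}_{[2,\underbrace{1,\dots,1}_{n-2}]}(123)
\end{equation*}
for every $i$. Summing these $n-1$ equal terms gives the claimed identity.

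There is no real obstacle once Lemma~\ref{blockbijection} is in hand; the only point requiring care is the pigeonhole observation that the block-size multiset is uniquely determined in the $k=n-1$ regime, which is what makes Lemma~\ref{blockbijection} directly applicable to every element of $\mathcal{OP}_{n,n-1}(123)$.
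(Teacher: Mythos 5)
Your proof is correct and follows the same route as the paper, which derives the corollary directly from Lemma~\ref{blockbijection} (the paper simply states that it follows, and your pigeonhole decomposition by the position of the unique size-$2$ block is exactly the intended argument). No issues.
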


It then remains to compute $\text{op}_{[2,\underbrace{1,\dots,1}_{n-2}]}(123)$.  We will show that

\begin{lemma}\label{twothenones} For $n\geq 2$,
$\text{op}_{[2,\footnotesize\underbrace{ 1,\dots,1}_{n-2}]}(123) = \dfrac{3(n-1)\binom{2n-2}{n-1}}{n(n+1)}$.
\end{lemma}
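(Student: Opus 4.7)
The plan is to exhibit a bijection between $\mathcal{OP}_{[2,\underbrace{1,\dots,1}_{n-2}]}(123)$ and the set $T_n := \{\pi \in \mathcal{S}_n(123) : \pi_1 > \pi_2\}$, and then to count $T_n$ via its complement in $\mathcal{S}_n(123)$. The bijection $\phi$ sends the ordered partition $P = \{a,b\}/c_1/\cdots/c_{n-2}$ (with $a<b$) to the permutation $\pi = b\,a\,c_1\,c_2\,\cdots\,c_{n-2}$, writing the two-element block with its larger entry first. The inverse simply groups the first two entries of $\pi$ as a block: $\pi \mapsto \{\pi_1,\pi_2\}/\pi_3/\cdots/\pi_n$. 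The crucial design choice is writing $b$ before $a$ so that $\pi_1 > \pi_2$; this kills the only potential $12$ pattern sitting entirely within the first block of $P$.

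I would next verify that $\phi$ preserves $123$-avoidance by a short case analysis on the positions $i<j<k$ of a putative $123$ pattern $\pi_i<\pi_j<\pi_k$. The case $(i,j)=(1,2)$ is ruled out by $\pi_1 = b > a = \pi_2$. If $i\in\{1,2\}$ and $j,k\geq 3$, the triple corresponds to an element of block $1$ together with the singleton blocks indexed by $j-1$ and $k-1$ in $P$; since $a \leq \pi_i$, the triple $(a, c_{j-2}, c_{k-2})$ is then a $123$ pattern in $P$. If $i \geq 3$, the pattern lives entirely among three singleton blocks of $P$. The converse implication (that a $123$ in $P$ lifts to a $123$ in $\pi$) is handled by the analogous unpacking: a block-$1$ element of $P$ sits at position $1$ or $2$ of $\pi$, and the other two singleton blocks sit at positions $\geq 3$. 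Thus $\phi$ is a bijection onto $T_n$.

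Finally, I count $T_n$. If $\pi\in\mathcal{S}_n(123)$ satisfies $\pi_1<\pi_2$, then any $\pi_k>\pi_2$ with $k\geq 3$ would produce the $123$ pattern $\pi_1<\pi_2<\pi_k$; hence $\pi_2=n$. Conversely, once $\pi_2=n$ the maximal value at position~$2$ cannot participate in any $123$, so deleting $\pi_2$ gives a bijection with $\mathcal{S}_{n-1}(123)$, a set of size $C_{n-1}$. Therefore $|T_n|=C_n-C_{n-1}$, and a routine simplification using $C_n=\frac{1}{n+1}\binom{2n}{n}$ reduces this to $\frac{3(n-1)\binom{2n-2}{n-1}}{n(n+1)}$, the desired formula. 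The only nontrivial step is the case analysis verifying that $\phi$ respects $123$-avoidance, but it is mechanical and I do not anticipate a serious obstacle.
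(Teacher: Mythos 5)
Your proof is correct, and it takes a genuinely different route from the paper's. The paper builds each partition in $\mathcal{OP}_{[2,\underbrace{1,\dots,1}_{n-2}]}(123)$ by starting from a $123$-avoiding permutation of $[n-1]$ whose first entry is $i$ --- counted by the Catalan-triangle formula $c_{n-1,i}$ of Lemma~\ref{catalantriangle} --- and inserting one of the $n-i$ admissible larger values into the first block, so the count emerges as the sum $\sum_{i=1}^{n-1}(n-i)\,c_{n-1,i}$, which then has to be evaluated in closed form. You instead observe that, because an occurrence of a pattern uses at most one element per block, writing the doubleton in descending order gives a bijection with $\{\pi\in\mathcal{S}_n(123):\pi_1>\pi_2\}$, and the complementary set $\{\pi\in\mathcal{S}_n(123):\pi_1<\pi_2\}$ forces $\pi_2=n$ and hence has size $C_{n-1}$; this yields the clean identity $\text{op}_{[2,\underbrace{1,\dots,1}_{n-2}]}(123)=C_n-C_{n-1}$, which simplifies to the stated formula (I checked: $C_n-C_{n-1}=\binom{2n-2}{n-1}\bigl(\tfrac{2(2n-1)}{n(n+1)}-\tfrac{1}{n}\bigr)=\tfrac{3(n-1)}{n(n+1)}\binom{2n-2}{n-1}$). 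Your argument is more self-contained --- it avoids Lemma~\ref{catalantriangle} and the closed-form summation entirely --- and the expression $C_n-C_{n-1}$ is arguably more illuminating; the paper's approach, on the other hand, sets up the refined statistic $c_{n,i}$ that it reuses to generalize to $\text{op}_{[p,\underbrace{1,\dots,1}_{n-p}]}(123)$ for arbitrary $p$, where your two-line complement trick does not extend as directly. Both proofs are complete and correct.
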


The proof of Lemma \ref{twothenones} relies on the following Lemma.

\begin{lemma}\label{catalantriangle}
The number of 123-avoiding permutations of length $n$ that begin with $i$ is given by $c_{n,i} = \dfrac{(n-2+i)!(n-i+1)}{(i-1)!n!}$.
\end{lemma}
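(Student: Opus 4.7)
The plan is to prove Lemma~\ref{catalantriangle} by induction on $n$. The central structural observation is that in any $\pi\in\mathcal{S}_n(123)$ with $\pi_1=i$, all entries greater than $i$ must appear in decreasing order of position---otherwise two of them together with $\pi_1=i$ would form a 123 pattern---and in particular $\pi_2\in\{1,\dots,i-1\}\cup\{n\}$. This observation immediately disposes of the extremes: $c_{n,1}=1$, because every entry after $\pi_1=1$ exceeds $1$ so any ascent would extend to a 123 (forcing $\pi=1,n,n-1,\dots,2$); and $c_{n,n}=C_{n-1}$, because $\pi_1=n$ is too large to serve as the ``1'' or ``2'' of a 123, leaving $\pi_2\cdots\pi_n$ free to range over $\mathcal{S}_{n-1}(123)$.

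For $2\le i\le n-1$ I would partition $\{\pi\in\mathcal{S}_n(123):\pi_1=i\}$ according to whether $\pi_2=n$ or $\pi_2<i$, and give a bijection for each piece. When $\pi_2=n$, deletion of $n$ yields a 123-avoider of $[n-1]$ starting with $i$ (deletion cannot destroy 123-avoidance since $n$ is maximal, and reinsertion of $n$ at position $2$ is safe because only $\pi_1$ sits to its left). This piece contributes $c_{n-1,i}$. When $\pi_2<i$, deleting $\pi_1=i$ and decrementing every remaining value greater than $i$ by one gives a 123-avoider $\sigma\in\mathcal{S}_{n-1}(123)$ with $\sigma_1=\pi_2<i$. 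The inverse increments values $\geq i$ by one and then prepends $i$; this preserves 123-avoidance because the structural observation applied to $\sigma$ (whose first entry is $<i$) forces every value $\ge i$ in $\sigma$ to appear in decreasing order, ruling out any 123 pattern that would use the newly prepended $i$. This piece contributes $\sum_{j=1}^{i-1}c_{n-1,j}$.

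Combining these two pieces yields the recurrence
\[
c_{n,i}\;=\;\sum_{j=1}^{i}c_{n-1,j}\qquad(1\le i\le n-1),
\]
which upon differencing is equivalent to the Pascal-type identity $c_{n,i}=c_{n-1,i}+c_{n,i-1}$. The closed formula $\frac{n-i+1}{n}\binom{n+i-2}{i-1}$ is then verified by induction: it matches the boundary values $c_{n,1}=1$ and $c_{n,n}=C_{n-1}$, and a direct algebraic check using Pascal's rule shows that it satisfies the Pascal-type recurrence, completing the induction. The main obstacle I anticipate is this last algebraic verification---concretely, showing that $(n-i+1)(n+i-2)/n$ and $(n-i)+\frac{(i-1)(n-i+2)}{n}$ agree---which is routine but a little finicky; the conceptual heart of the proof is the pair of structural bijections, which cleanly reduce 123-avoiders starting with $i$ either to shorter 123-avoiders starting with $i$ or to same-length 123-avoiders starting with a smaller index.
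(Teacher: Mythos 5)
Your proof is correct, but it takes a genuinely different route from the paper's. The paper never works with $123$-avoiders directly: it transfers the problem to $132$-avoiders via the Simion--Schmidt bijection (which preserves the first letter), decomposes a $132$-avoider beginning with $i$ according to the position of $n$, and matches the resulting convolution recurrence $c_{n,i}=\sum_{j=1}^{i}c_{n-j,i-j+1}C_{j-1}$ against Callan's recurrence for the Catalan triangle, so that the closed form is inherited from the known formula for $T_{n+1,i+1}$. You instead stay entirely inside $\mathcal{S}_n(123)$, exploit the observation that all entries exceeding $\pi_1=i$ must occur in decreasing order (so $\pi_2\in\{1,\dots,i-1\}\cup\{n\}$), and split on the second letter to obtain the much simpler first-order recurrence $c_{n,i}=\sum_{j=1}^{i}c_{n-1,j}$, equivalently the Pascal-type ballot recurrence $c_{n,i}=c_{n,i-1}+c_{n-1,i}$, which you then check against the closed form. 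Both of your bijections are sound (in particular, the reinsertion arguments are justified by the decreasing-order observation applied to the shorter permutation), and the algebraic identity you flag as the remaining obstacle does hold: it reduces to $(n-i+1)(n+i-2)=n(n-i)+(i-1)(n-i+2)$, both sides being $n^2-n-i^2+3i-2$. What each approach buys: yours is self-contained and more elementary, avoiding the citation of Callan and the detour through $132$; the paper's leverages the first-letter-preserving bijection it needs anyway and identifies the numbers with a named triangle whose closed form is already on record.
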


\begin{proof}
We will prove this by showing that the number of $132$-avoiding permutations of $[n]$ that begin with $i$ is $c_{n,i}$.  Since the classic bijection between $132$-avoiders and $123$-avoiders given by Simion and Schmidt~\cite{SS85} preserves the first element of the permutation, we will have that the number of $123$-avoiding permutations of $[n]$ beginning with $i$ is $c_{n,i}$.

Let $T_{n,k}$ be the entry in the $n^{th}$ row and $k^{th}$ column of the Catalan Triangle (see OEIS A009766~\cite{OEIS}).  We will prove that $c_{n,i}=T_{n+1,i+1}=\dfrac{(n-2+i)!(n-i+1)}{(i-1)!n!}$.  Callan~\cite{C02} shows that if $C_j$ is the $j^{th}$ Catalan number then $$T_{n,k}=\sum_{j=1}^kC_{j-1}T_{n-j,k-j+1},$$ for $n\geq k$, with $T_{n,0}=1$ for $n\geq0$ and $T_{n,k}=0$ if $k>n$.  

We will show that $c_{n,i}$ satisfies the same recursion.  Let $p\in S_n(132)$ begin with 1, then $p=12\cdots n$.  Thus, $c_{n,1}=1$ for $n\geq 1$.  If $i>n$ then there can be no permutation of $[n]$ beginning with $i$.  

Assume now that $p\in S_n(132)$ and begins with $i$.  Suppose that $n$ is in the $j^{th}$ position.  To avoid $132$ we must have that the $j-1$ largest elements, which must include $i$, appear before $n$, the remaining elements appear after, and each forms a $132$-avoiding permutation.  Thus, since $p$ must begin with $i$, we must have that $n-i+1\leq j\leq n$.  The last $n-j$ elements must form a $132$-avoiding permutation, and there are $C_{n-j}$ such permutations.  

The first $j-1$ elements must form a $132$-avoiding permutation beginning with $i$.  There are $c_{j-1,j-n+i}$ such permutations because $i$ becomes the $(j-n+i)^{th}$ largest element.  

Summing over appropriate values of $j$ we obtain $$c_{n,i}=\sum_{j=n-i+1}^nc_{j-1,j-n+i}C_{n-j}.$$  Rewriting this sum gives $$c_{n,i}=\sum_{j=1}^ic_{n-j,i-j+1}C_{j-1}.$$  

We conclude that $c_{n,i}=T_{n+1,i+1}$ for $n\geq i\geq 1$.  \end{proof}

Now, we prove the other lemmas above.

\begin{proof}[Proof of Lemma \ref{blockbijection}]
We will actually show that $\text{op}_{[b_1,\dots,b_i, b_{i+1},\dots, b_k]}(123) = \text{op}_{[b_1,\dots,b_{i+1}, b_{i},\dots, b_k]}(123)$ for any $1 \leq i \leq k-1$ and for any block sizes $b_1, b_2, \dots, b_k \geq 1$.  Since any permutation of $[b_1, \dots, b_k]$ can be obtained by adjacent transpositions, this suffices to prove Lemma \ref{blockbijection}.

Consider $\pi \in \mathcal{OP}_{[b_1,\dots,b_i, b_{i+1},\dots, b_k]}(123)$.  We construct $f(\pi) \in \mathcal{OP}_{[b_1,\dots,b_{i+1}, b_{i},\dots, b_k]}(123)$ with the following observations.  For each member of $j \in B_{i+1}$ exactly one of the following is true:
\begin{enumerate}
\item $j$ is not part of a 12 pattern involving an element not in $B_i$.
\item $j$ plays the role of a 1 in a 12 pattern involving an element not in $B_i$.  We will call an occurrence involving $B_\ell$, where $\ell >i+1$ is the minimal value for which this happens, a \emph{critical occurrence}.
\item $j$ plays the role of a 2 in a 12 pattern involving an element not in $B_i$.  We will call an occurrence involving $B_\ell$, where $\ell \leq i-1$ is the maximal value for which this happens, a \emph{critical occurrence}.
\end{enumerate}

Clearly 1 and 2 or 1 and 3 cannot occur simultaneously.  If 2 and 3 occur simultaneously then there is a copy of 123 in $\pi$.  

Now, there is a unique way to sort the elements of $B_i \cup B_{i+1}$ into blocks of size $b_{i+1}$ and $b_i$ so that the resulting partition avoids 123 and the critical occurrences of 12 remain occurrences of 12.  Let $\widehat{B_i}$ and $\widehat{B_{i+1}}$ be blocks $i$ and $i+1$ in $f(\pi)$, as defined below.

First the elements of $B_{i+1}$ that are not involved in a critical 12-pattern are placed into $\widehat{B_i}$.  If there are $a$ elements of $B_{i+1}$ playing the role of a 1 in a 12-pattern, then there is some number in a later block that plays the role of 2 in all critical occurrences of 12 with each of these $a$ elements. The largest $a$ elements of $B_{i} \cup B_{i+1}$ that are less than $j$ are placed in $\widehat{B_i}$.  If there are $b$ elements of $B_{i+1}$ playing the role of a 2 in a 12-pattern, then the largest $b$ elements of $B_i \cup B_{i+1}$ are placed in $\widehat{B_i}$.

All other elements of $B_i \cup B_{i+1}$ are placed into $\widehat{B_{i+1}}$.

This is indeed the unique way of rearranging these elements so that we swap the sizes of blocks $i$ and $i+1$, maintain all critical occurrences of 12, and still avoid 123.  First of all, there must be $a$ elements less than $j$ moved to $\widehat{B_i}$, but if they are not the largest $a$ elements less than $j$, we form a 123 pattern using one of these elements, a larger element less than $j$ in $\widehat{B_{i+1}}$, and $j$.  Also, there must be $b$ elements in $\widehat{B_i}$ that play the role of 2 in critical occurrences of 12.  If these are not the largest possible elements in $B_i \cup B_{i+1}$, then we create a 123 pattern using the 1 from the critical occurrence, one of these element from $\widehat{B_i}$, and a larger element in $\widehat{B_{i+1}}$.

Thus, $f(\pi)$ consists of leaving all blocks other than $B_i$ and $B_{i+1}$ unchanged, and rearranging $B_i$ and $B_{i+1}$ as described above.  

\end{proof}

For example, consider the 123-avoiding partition $\pi = 5/37/146/2$.  Let $i=2$.  We wish find a partition with block sizes $b_1=1$, $b_2=3$, $b_3=2$, and $b_4=1$ that avoids 123.  Notice that of the three elements in $B_3 = \{1,4,6\}$.  $1$ is a 1 in the 12- pattern 1/2, 4 is not involved in a 12-pattern outside of blocks $B_2$ and $B_3$, and 6 is a 2 in the 12-pattern 5/6.  Thus, $f(5/37/146/2)  = 5/147/36/2$.

\begin{proof}[Proof of Lemma \ref{twothenones}]
To construct a 123-avoiding partition where the first block has size 2 and all other blocks have size 1, we may begin with a 123-avoiding permutation of length $n-1$ that begins with $i$.  By Lemma \ref{catalantriangle} there are $c_{n,i} = \dfrac{(n-2+i)!(n-i+1)}{(i-1)!n!}$ such permutations.

Then, we insert an element larger than $i$ into the first block.  (Here, inserting $j$ means that all integers in the permutation greater than or equal to $j$ are incremented by 1, and all entries less than $j$ remain the same.)  This new ordered partition certainly avoids 123, since the new element being involved in a 123 pattern means that $i$ would have been involved in a 123-pattern, which contradicts that we began with a 123-avoiding permutation.

If the permutation begins with $i$, then there are $(n-i)$ possible numbers to insert above $i$ to obtain an ordered partition of $[n]$.  Summing over all possible values for $i$, we obtain

$\displaystyle\text{op}_{[2,\underbrace{1,\dots,1}_{n-2}]}(123) = \sum_{i=1}^{n-1} (n-i) c_{n-1,i} = \sum_{i=1}^{n-1} (n-i) \dfrac{(n-3+i)!(n-i)}{(i-1)!(n-1)!} = \dfrac{3(n-1)\binom{2n-2}{n-1}}{n(n+1)}.$

\end{proof}

Now, combining Lemma \ref{twothenones} with Corollary \ref{ntimes} gives us

$$\text{op}_{n,n-1}(123) = (n-1)\dfrac{3(n-1)\binom{2n-2}{n-1}}{n(n+1)} = \dfrac{3(n-1)^2\binom{2n-2}{n-1}}{n(n+1)},$$ which proves Theorem \ref{prodform}.

Note that when $n=2$, this equation simplifies to

$$\text{op}_{2,1}(123) =  \dfrac{3(2-1)^2\binom{2\cdot 2-2}{2-1}}{2(2+1)} = \dfrac{3 \binom{2}{1}}{2\cdot 3} = 1.$$

And when $n>2$, 

$$\dfrac{(4n-6)(n-1)^2}{(n-2)^2(n+1)}\text{op}_{n-1,n-2}(123) = \dfrac{(4n-6)(n-1)^2}{(n-2)^2(n+1)} \cdot \dfrac{3 \cdot (n-2)^2 \binom{2n-4}{n-2}}{(n-1)n}$$

$$= \dfrac{3(4n-6)(n-1) \binom{2n-4}{n-2}}{(n+1)n}=\dfrac{3(n-1)^2 \binom{2n-2}{n-1}}{(n+1)n},$$

which confirms Theorem \ref{recform}.

Note that via algebraic manipulation, our result for $\text{op}_{[2,\underbrace{1,\dots,1}_{n-2}]}(123)$ can be written as

$\text{op}_{[2,\underbrace{1,\dots,1}_{n-2}]}(123) = \dfrac{3\binom{2n-2}{n-2}}{(n+1)}$.

We may generalize to obtain the following:

$\displaystyle\text{op}_{[3,\underbrace{1,\dots,1}_{n-3}]}(123) = \sum_{i=1}^{n-2} \binom{n-i}{2} c_{n-2,i} = \dfrac{4\binom{2n-3}{n-3}}{(n+1)}$, and

$\displaystyle\text{op}_{[4,\underbrace{1,\dots,1}_{n-4}]}(123) = \sum_{i=1}^{n-3} \binom{n-i}{3} c_{n-3,i} = \dfrac{5\binom{2n-4}{n-4}}{(n+1)}$.

In general, we have that 

$\displaystyle\text{op}_{[p,\underbrace{1,\dots,1}_{n-p}]}(123) = \sum_{i=1}^{n-(p-1)} \binom{n-i}{p-1} c_{n-(p-1),i} = \dfrac{(p+1)\binom{2n-p}{n-p}}{(n+1)}$.

Therefore, 

\begin{theorem} For $n> p\geq1$, the number of 123-avoiding ordered partitions of $n$ into $n-p+1$ parts where there is one part of size $p$ and $n-p$ parts of size 1 is given by $$\dfrac{(n-p+1)(p+1)\binom{2n-p}{n-p}}{(n+1)}.$$ \end{theorem}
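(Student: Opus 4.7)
\medskip

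The plan is to assemble the theorem from three ingredients: a symmetry reduction, a bijective extension of the $p=2$ argument, and a binomial identity.

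First, I would invoke Lemma \ref{blockbijection} to reduce to the case where the size-$p$ block sits in position~$1$. Since the multiset of block sizes $[p,1,\ldots,1]$ has $n-p+1$ distinct arrangements (one for each position of the size-$p$ block), and all yield the same count of $123$-avoiding partitions, the total equals $(n-p+1)\cdot\text{op}_{[p,\underbrace{1,\dots,1}_{n-p}]}(123)$. This is precisely the factor $(n-p+1)$ that appears in the claimed formula, so it suffices to prove
\[
\text{op}_{[p,\underbrace{1,\dots,1}_{n-p}]}(123) \;=\; \dfrac{(p+1)\binom{2n-p}{n-p}}{n+1}.
\]

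Second, I would generalize the insertion construction from the proof of Lemma \ref{twothenones}. Given a $123$-avoiding permutation $\sigma\in\mathcal{S}_{n-p+1}(123)$ beginning with the value $i$, I choose a subset $S\subseteq\{i+1,i+2,\ldots,n\}$ of size $p-1$, increment the entries of $\sigma$ to make room for the values in $S$, and place $\{i\}\cup S$ as the first block. The resulting object lies in $\mathcal{OP}_{[p,1,\ldots,1]}$. It avoids $123$ because any copy of $123$ that uses a newly inserted value $j\in S$ as its smallest entry, say $j<a<b$ with $a,b$ in later blocks, would force $i<j<a<b$ and therefore produce a $123$-pattern $i,a,b$ already present in $\sigma$, contradicting the choice of $\sigma$. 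Conversely, given any partition in $\mathcal{OP}_{[p,1,\ldots,1]}(123)$, deleting all elements of the first block except its minimum and standardizing recovers a unique $123$-avoiding permutation of length $n-p+1$, and the deleted values are precisely a $(p-1)$-subset of the entries exceeding that minimum. Summing over the $\binom{n-i}{p-1}$ choices of $S$ and using Lemma \ref{catalantriangle}, this bijection yields
\[
\text{op}_{[p,\underbrace{1,\dots,1}_{n-p}]}(123)\;=\;\sum_{i=1}^{n-p+1}\binom{n-i}{p-1}\,c_{n-p+1,i}.
\]

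Third, I would evaluate this sum and show it equals $\tfrac{(p+1)\binom{2n-p}{n-p}}{n+1}$. This is the main obstacle. Substituting $c_{n-p+1,i}=\tfrac{(n-p-1+i)!(n-p+2-i)}{(i-1)!(n-p+1)!}$ and pulling out constants converts the sum into a hypergeometric form whose closed evaluation follows either from a Vandermonde-type convolution on shifted ballot numbers or, more transparently, from induction on $p$: the base case $p=2$ is Lemma \ref{twothenones}, and the inductive step can be carried out by splitting $\binom{n-i}{p-1}=\binom{n-i-1}{p-1}+\binom{n-i-1}{p-2}$ and reindexing, which expresses the $p$-sum in terms of the $p-1$ case with $n$ replaced by $n-1$ and a secondary ballot-number sum that collapses via the Catalan-triangle recursion $T_{m,k}=T_{m,k-1}+T_{m-1,k}$. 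Once this identity is established, multiplying by $(n-p+1)$ from the first step gives the stated formula.
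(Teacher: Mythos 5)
Your proposal takes essentially the same route as the paper: Lemma~\ref{blockbijection} contributes the factor $(n-p+1)$, the insertion of a $(p-1)$-element subset of $\{i+1,\dots,n\}$ into the first block of a $123$-avoiding permutation beginning with $i$ generalizes the proof of Lemma~\ref{twothenones} and gives $\text{op}_{[p,1,\dots,1]}(123)=\sum_{i}\binom{n-i}{p-1}c_{n-p+1,i}$, and the final step is the closed-form evaluation of that sum. The paper merely asserts that evaluation without proof, so your sketch of how to carry it out (hypergeometric summation or induction on $p$) is, if anything, more explicit than the source.
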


\section{A Bijection and Pattern Avoidance in Words}\label{S:bij}

As was mentioned in the previous section, the usual symmetries of reversal and complementation are not enough to show that $\text{op}_{[b_1,\dots,b_k]}(123)=\text{op}_{[b_1,\dots,b_k]}(132)$.  We will do this by adapting the familiar bijection of Simion and Schmidt~[11].  To use their bijection we will need a notion of left-to-right minima for set partitions.  Let $p=B_1/B_2/\dots/B_k\in\mathcal{OP}$, then we will say that element $a\in B_i$ is a {\it left-to-right minimum} if $a$ is smaller than every element appearing in blocks $B_j$ for $1\leq j\leq i-1$.  

We will first describe the bijection through example.  Consider the ordered partition $59/38/1267/4$.  This partition avoids 123.  Notice that the left-to-right minima in this partition are 1, 2, 3, 5, and 9.  Also, notice that the other elements in this partition form a decreasing sequence if we place the elements in the same block in decreasing order.  Remove the elements that are not left-to-right minima.  Now we have the partition $59/3/12/\emptyset$, where the second block is missing one element, the third is missing two elements and the last block is missing one element.  We will fill the gaps in the blocks working from left to right by placing the smallest remaining elements that is larger than the smallest left-to-right minimum in the preceding block.  So we would place the element 6 in block 2, obtaining $59/36/12/\emptyset$.  We would then place 4 and 7 in the third block, producing the partition $59/36/1247/\emptyset$.  Finally, we place 8 in the last block producing $59/36/1247/8$.  This partition is 132 avoiding.  

The inverse of this bijection is achieved by placing all of the elements other than the left-to-right minima in descending order.  

\begin{theorem} For $n\geq 1$, $\operatorname{op}_{[b_1,\dots,b_k]}(123)=\operatorname{op}_{[b_1,\dots,b_k]}(132)$.  \end{theorem}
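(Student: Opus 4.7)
The plan is to formalize the bijection $\phi$ illustrated in the example and to exhibit its inverse $\psi$, both keyed to the left-to-right minima (LRmin) of each partition. For $p = B_1/\cdots/B_k \in \mathcal{OP}_{[b_1,\ldots,b_k]}$, let $L_i(p)$ denote the set of LRmin contained in $B_i$ and $g_i(p) = b_i - |L_i(p)|$ the number of remaining slots. My key claim is that among all ordered partitions with a prescribed LRmin profile $(L_1,\ldots,L_k)$, exactly one member lies in $\mathcal{OP}_{[b_1,\ldots,b_k]}(123)$ and exactly one lies in $\mathcal{OP}_{[b_1,\ldots,b_k]}(132)$; pairing these two gives the required bijection, and the maps $\phi$ and $\psi$ described above realize it explicitly.

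First I would establish the characterization on the 123 side. The key observation is that if $x \in B_i$ and $y \in B_j$ are both non-LRmin with $i < j$, then $x > y$; otherwise a witness $c < x$ in some earlier block $B_{i'}$ (which exists because $x$ is non-LRmin) combines with $x$ and $y$ to produce a $123$ pattern. This forces the non-LRmin values, read left-to-right block by block, to appear in strictly decreasing order of magnitude, pinning down the 123-avoider with profile $(L_1,\ldots,L_k)$ uniquely. A short argument then verifies that this canonical distribution really does avoid $123$: in any candidate triple $a<b<c$ across blocks $i_1<i_2<i_3$, both $b$ and $c$ are forced to be non-LRmin (else they would violate the LRmin condition), which forces $b > c$ and a contradiction.

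Next I would carry out the parallel analysis on the 132 side. Here the claim is that the non-LRmin values must be placed greedily from left to right: block $B_i$ receives the $g_i$ smallest available non-LRmin values exceeding the threshold $m_{i-1} := \min(L_1 \cup \cdots \cup L_{i-1})$, which is precisely the cutoff that prevents a freshly placed value from becoming a LRmin. If, for a contradiction, block $B_i$ held a value $v'$ while a smaller $v < v'$ with $v > m_{i-1}$ sat in a later block $B_j$, then the LRmin achieving $m_{i-1}$, together with $v'$ and $v$, forms a $132$ pattern across these three blocks. A symmetric check shows the greedy construction itself contains no $132$ pattern, yielding both uniqueness and existence on the 132 side.

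With the two uniqueness results in hand, I define $\phi(p)$ to be the unique 132-avoider with the same LRmin profile as $p$, and $\psi(q)$ to be the unique 123-avoider with the same LRmin profile as $q$. Block sizes are preserved by construction, and the LRmin profile is preserved because every freshly placed value is strictly above the relevant threshold $m_{i-1}$ (hence non-LRmin in its block); the two uniqueness statements then yield $\psi \circ \phi = \mathrm{id}$ and $\phi \circ \psi = \mathrm{id}$. The main obstacle I expect is the 132-side bookkeeping: checking that $m_{i-1}$ is well-defined even when some $L_j$ is empty, and that the greedy procedure never exhausts its candidates. Both issues reduce to the elementary observation that LRmin values strictly decrease from one nonempty $L_j$ to the next, which keeps the relevant ``above threshold'' pools nested in the required way.
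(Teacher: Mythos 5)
Your proof is correct and is essentially the paper's argument: both adapt the Simion--Schmidt bijection by fixing the left-to-right minima and observing that the remaining elements must be arranged in decreasing order to avoid $123$ and greedily (smallest available element above the running minimum) to avoid $132$. Your ``unique representative per left-to-right-minimum profile'' framing simply makes explicit the feasibility and invertibility checks that the paper's proof leaves implicit.
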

\begin{proof}
Let $\phi:\mathcal{OP}_{[b_1,\dots,b_k]}(123)\rightarrow\mathcal{OP}_{[b_1,\dots,b_k]}(132)$ be as follows.  For $p\in\mathcal{OP}_{[b_1,\dots,b_k]}(123)$, we will construct a corresponding partition $\phi(p)\in\mathcal{OP}_{[b_1,b_2,\dots,b_k]}(132)$. 

First find the left-to-right minima of $p$ and leave them fixed.  Now, remove the other elements of the partition.  Working from left to right fill in the missing entries in each block by placing the smallest remaining element that is larger than the preceding left-to-right minimum.  This new partition will avoid 132, since if a copy of 132 did appear, then one would appear with a left-to-right minimum representing the 1 in the copy of 132.  This would imply that the element representing the 2 was placed in a block after then element representing the 3, which contradicts the prescribed placement of the elements.  Thus, no copy of 132 appears.

The inverse of this construction is to again leave the left-to-right minima in place and place the remaining elements in descending order.  Since the partition will essentially consist of two decreasing sequences, there is no way to form a copy of 123.  \end{proof}

In his thesis~\cite{B98}, Burstein shows that the number of words avoiding the permutation $123$ is the same as the number of words avoiding the permutation $132$ using analytic techniques.  Jel\'{i}nek and Mansour~\cite{JM08} give a bijective proof of the same fact.  It turns out that the bijection above can be used to give another bijective proof of this fact.  We will need to first discuss how words and ordered partitions are related.  

Using the concept of a permutation graph, we define an ordered partition graph to be a permutation graph where we allow more than one entry in a column.  For example, the graph associated to the partition $4/13/256$ is given in Figure 1.

\begin{center}  

$\xy (0,0)*{}; (0,35)*{} **\dir{-};
(0,0)*{}; (20,0)*{} **\dir{-};
(-3,5)*{1};
(-3,10)*{2};
(-3,15)*{3};
(-3,20)*{4};
(-3,25)*{5};
(-3,30)*{6};
(-3,35)*{7};
(0,5)*{}; (20,5)*{} **\dir{-};
(0,10)*{}; (20,10)*{} **\dir{-};
(0,15)*{}; (20,15)*{} **\dir{-};
(0,20)*{}; (20,20)*{} **\dir{-};
(0,25)*{}; (20,25)*{} **\dir{-};
(0,30)*{}; (20,30)*{} **\dir{-};
(0,35)*{}; (20,35)*{} **\dir{-};
(5,0)*{}; (5,35)*{} **\dir{-};
(10,0)*{}; (10,35)*{} **\dir{-};
(15,0)*{}; (15,35)*{} **\dir{-};
(20,0)*{}; (20,35)*{} **\dir{-};
(5,20)*{\bullet};
(10,5)*{\bullet};
(10,15)*{\bullet};
(15,10)*{\bullet};
(15,25)*{\bullet};
(15,30)*{\bullet};
(5,-3)*{1};
(10,-3)*{2};
(15,-3)*{3};
(7,-10)*{\mbox{{\bf Figure 1:} Graph of }4/13/256};
\endxy
$
\end{center}
\medskip

Let $k\geq1$ and $n\geq1$ then the set of words of length $n$ with letters from the alphabet $[k]$ is denoted $[k]^n$.  We say that a word $w\in[k]^n$ {\it contains} $u\in[\ell]^m$ if there are indices $i_1<i_2<\dots<i_m$ such that the word $w_{i_1}w_{i_2}\dots w_{i_m}$ is order isomorphic to $u$.  Otherwise we say that $w$ {\it avoids} $u$.  We let $[k]^n(u)$ be the set of all $w\in[k]^n$ that avoid $u$.  For example the word $232133\in[3]^6$ avoids the word 123.  Since permutations are words in $[n]^n$ without repeated letters, we can consider words that avoid permutations as well.  

We may encode a word using a graph like that above.  In the case of a word, however, we will allow more than one entry in a row.  The word $232133$ has graph.

\medskip
\begin{center}

$\xy (-10,0)*{}; (25,0)*{} **\dir{-};
(-10,0)*{}; (-10,20)*{} **\dir{-};
(-5,-3)*{1};
(0,-3)*{2};
(5,-3)*{3};
(10,-3)*{4};
(15,-3)*{5};
(20,-3)*{6};
(25,-3)*{7};
(-5,0)*{}; (-5,20)*{} **\dir{-};
(0,0)*{}; (0,20)*{} **\dir{-};
(5,0)*{}; (5,20)*{} **\dir{-};
(10,0)*{}; (10,20)*{} **\dir{-};
(15,0)*{}; (15,20)*{} **\dir{-};
(20,0)*{}; (20,20)*{} **\dir{-};
(25,0)*{}; (25,20)*{} **\dir{-};
(-10,5)*{};  (25,5)*{} **\dir{-};
(-10,10)*{}; (25,10)*{} **\dir{-};
(-10,15)*{}; (25,15)*{} **\dir{-};
(-10,20)*{}; (25,20)*{} **\dir{-};
(10,5)*{\bullet};
(-5,10)*{\bullet};
(5,10)*{\bullet};
(0,15)*{\bullet};
(15,15)*{\bullet};
(20,15)*{\bullet};
(-13,5)*{1};
(-13,10)*{2};
(-13,15)*{3};
(5,-10)*{\mbox{{\bf Figure 2:} Graph of }232133};
\endxy
$
\end{center}
\medskip

As was mentioned in Section 2, we have three symmetries reversal and complementation which were defined, and inverse which is not a symmetry of ordered partitions, since the inverse operation will not create an ordered partition from any ordered partition with a block with more than one element.  Since the inverse operation is applied to an ordered partition by simply reflecting the graph of the partition in the line $y=x$, we observe that an ordered partition becomes a word when we apply the inverse operation.  Observe that the $232133^i=4/13/256$, where $i$ is the inverse operation.

This gives us that the inverse operation $i$ is a bijection between the set of words $[k]^n$ and the set of ordered partitions with $k$ blocks where we allow blocks to be empty.  For example the word $255332255\in[5]^9$ is mapped to the ordered partition $\emptyset/167/45/\emptyset/2389$, that is $255332255^i=\emptyset/167/45/\emptyset/2389$.  Notice that the first and fourth blocks are empty.  We also observe that the permutations $123$ and $132$ are fixed under the inversion operation.  Thus, if a word $w$ avoids 123 or 132 then so does $w^i$ and similarly for ordered partitions.  

Recall the map $\phi:\mathcal{OP}_{[b_1,\dots,b_k]}(123)\rightarrow\mathcal{OP}_{[b_1,\dots,b_k]}(132)$ from above.  We may extend this map to ordered partitions where we allow some empty blocks, by simply leaving these blocks empty.  For any $w\in [k]^n(123)$, we have that $(\phi(w^i))^i\in[k]^n(132)$.  Let $\psi$ be the operation where we invert a word to form an ordered partition then apply the map $\phi$ above and then invert again to produce a word.  We note that $\psi$ itself is an involution, and hence we have the following theorem:

\begin{theorem} For $n\geq0$, $$\left|[k]^n(123)\right|=\left|[k]^n(132)\right|.$$\end{theorem}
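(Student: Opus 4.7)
The plan is to transport the bijection $\phi:\mathcal{OP}_{[b_1,\dots,b_k]}(123)\to\mathcal{OP}_{[b_1,\dots,b_k]}(132)$ from the previous theorem across the inversion (graph-reflection) correspondence between words and ordered partitions. The work to be done consists of (a) making the correspondence precise when empty blocks are allowed, (b) checking that $\phi$ extends verbatim to that setting, and (c) observing that the patterns $123$ and $132$ are invariant under inversion.

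First I would spell out the bijection $i$ between $[k]^n$ and the set of ordered partitions of $[n]$ into $k$ possibly empty blocks: a word $w=w_1\cdots w_n$ is sent to the partition whose $j$-th block is $\{t : w_t=j\}$, which is exactly the reflection of the graph of $w$ across the line $y=x$. Because reflection across $y=x$ preserves the patterns $123$ and $132$ (each is fixed by inversion), a word $w$ avoids $123$ (resp.\ $132$) if and only if $w^i$ avoids $123$ (resp.\ $132$) as an ordered partition. Hence $i$ restricts to bijections between $[k]^n(123)$ and the set of $123$-avoiding ordered partitions of $[n]$ into $k$ possibly empty blocks, and similarly for $132$.

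Next I would extend $\phi$ to ordered partitions with possibly empty blocks by declaring that empty blocks are simply left empty. The definition of $\phi$ only uses the block sizes and the identification of left-to-right minima, so nothing in the construction needs to be altered: the left-to-right minima of the input are the same elements after extension, and the greedy filling-in step iterates over the nonempty blocks in left-to-right order. The two small verifications are that (i) inserting an empty block introduces no new left-to-right minima and no new $123$- or $132$-patterns, so the proof of avoidance given for the previous theorem carries over, and (ii) the inverse of $\phi$, which places the non-minima in decreasing order, is well-defined block by block when some blocks are empty.

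Finally, I would define $\psi: [k]^n(123)\to [k]^n(132)$ by $\psi(w) = (\phi(w^i))^i$. Since $i$ is a bijection and $\phi$ is a bijection on its extended domain, $\psi$ is a bijection, which proves the theorem. The main obstacle will be writing the first two bullets cleanly: one must be careful that "left-to-right minimum" for an ordered partition with empty blocks is still the right notion (an element $a\in B_i$ with $a$ smaller than every element in $B_1,\dots,B_{i-1}$, vacuously satisfied across empty blocks), and that the avoidance arguments from the proof of the previous theorem go through word-for-word in this extended setting. Everything else is bookkeeping.
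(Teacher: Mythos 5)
Your proposal is correct and follows essentially the same route as the paper: both transport the bijection $\phi$ across the inversion correspondence between words and ordered partitions with possibly empty blocks, using the fact that $123$ and $132$ are fixed under inversion, and define $\psi(w)=(\phi(w^i))^i$. The extra care you take in checking that $\phi$ extends to empty blocks is a welcome refinement of a step the paper treats as immediate.
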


\section{Enumeration schemes for ordered partitions}\label{S:enumerate}

Now that we have investigated ordered partitions avoiding patterns of length 2 and 3, we consider a way to enumerate members of $\mathcal{OP}_{[b_1,\dots,b_k]}(123)$ more generally.  The ideas in this section are largely adapted from the notion of enumeration schemes.  Informally, an enumeration scheme is a system of recurrences that enumerate members of a family of sets.  Such enumeration schemes were first introduced by Zeilberger \cite{Z98} in the context of pattern-avoiding permutations.  Later, Vatter \cite{V08} improved the efficiency of the schemes and Pudwell \cite{P10} generalized enumeration schemes to apply to pattern-avoiding words, that is, permutations of multisets.  One particularly attractive point of enumeration schemes in these contexts have been that the recurrences involved in enumeration schemes can be completely deduced by computer algorithm.  Although such an algorithm has not yet been written for ordered partitions avoiding \emph{any} permutation pattern, we capture many of the same ideas of Zeilberger, Vatter, and Pudwell and adapt them to the current context in order to find a family of recurrences that computes $\text{op}_{[b_1,\dots,b_k]}(123)$ for any list $b_1,\dots ,b_k$ of block sizes.

First, we condition on the patterns formed by the members of the first few blocks of a partition.  To this end, let $$\mathcal{OP}_{[b_1,\dots,b_k]}(123;s) = \{p \in \mathcal{OP}_{[b_1,\dots, b_k]}(123)\vert B_1=s\}$$ and $\text{op}_{[b_1,\dots,b_k]}(123;s) = \left|\mathcal{OP}_{[b_1,\dots,b_k]}(123;s)\right|$. For example, $\text{op}_{[2,1,1]}(123;\{2,4\})=2$ because $\mathcal{OP}_{[2,1,1]}(123;\{2,4\})=\{24/1/3,24/3/1\}$.

Similarly, let 
$$\mathcal{OP}_{[b_1,\dots,b_k]}(123;[s,t]) = \{p \in \mathcal{OP}_{[b_1,\dots, b_k]}(123)\vert B_1=s, B_2=t\}$$ and $\text{op}_{[b_1,\dots,b_k]}(123;[s,t]) = \left|\mathcal{OP}_{[b_1,\dots,b_k]}(123;[s,t])\right|$. For example, $\text{op}_{[2,1,1]}(123;[\{2,4\},\{3\}])=1$ because $\mathcal{OP}_{[2,1,1]}(123;[\{2,4\},\{3\}])=\{24/3/1\}$.

Notice that we immediately have $\text{op}_{[b_1]}(123;\{1,2,\dots,b_1\})=1$ and $\text{op}_{[b_1,b_2]}(123;s)=1$ for any set $s \subseteq [n]$ where $\left|s\right|=b_1$.  More generally, we also see that
$$\mathcal{OP}_{[b_1,\dots,b_k]}(123) = \bigcup_{s \subseteq [n], \left|s\right|=b_1} \mathcal{OP}_{[b_1,\dots,b_k]}(123;s), \text{  and}$$
$$\mathcal{OP}_{[b_1,\dots,b_k]}(123;s) = \bigcup_{t \subseteq [n] \setminus s, \left|t\right|=b_2} \mathcal{OP}_{[b_1,\dots,b_k]}(123;[s,t]).$$  In addition, since the sets on the right hand side of each equation are disjoint, we have
\begin{equation}\text{op}_{[b_1,\dots,b_k]}(123) = \sum_{s \subseteq [n], \left|s\right|=b_1} \text{op}_{[b_1,\dots,b_k]}(123;s), \text{  and}\label{Eq:nullprefix}\end{equation}
\begin{equation}\text{op}_{[b_1,\dots,b_k]}(123;s) = \sum_{t \subseteq [n] \setminus s, \left|t\right|=b_2} \text{op}_{[b_1,\dots,b_k]}(123;[s,t]).\label{Eq:1prefix}\end{equation}

We are interested in $\text{op}_{[b_1,\dots,b_k]}(123)$, which can be written in terms of $\text{op}_{[b_1,\dots,b_k]}(123;s)$. We have an exact value for $\text{op}_{[b_1,\dots,b_k]}(123;s)$ in a couple cases, but otherwise we write $\text{op}_{[b_1,\dots,b_k]}(123;s)$ in terms of $\text{op}_{[b_1,\dots,b_k]}(123;[s,t])$.  It remains to find a recurrence for each $\text{op}_{[b_1,\dots,b_k]}(123;[s,t])$.

Notice that the sets $s$ and $t$ may interact in one of two ways: either all elements in $s$ are larger than all elements of $t$, or they are not.

In the first case, when all elements of $s$ are larger than all elements of $t$, we note that there is a bijection between the sets
$\mathcal{OP}_{[b_1,\dots, b_k]}(123;[s,t])$ and $\mathcal{OP}_{[b_2,\dots,b_k]}(123;t)$.  Consider $p \in \mathcal{OP}_{[b_1,\dots, b_k]}(123;[s,t])$.  Certainly, deleting the entire first block does not produce a new 123 pattern, so after replacing the $i$th smallest element of the remaining ordered partition with $i$, we have obtained a member of $\mathcal{OP}_{[b_2,\dots,b_k]}(123;t)$.  To show that this is indeed a bijection, we also show that no 123-containing partitions would map to $\mathcal{OP}_{[b_2,\dots,b_k]}(123;t)$.  To this end, suppose that $p$ has block sizes $[b_1,\dots, b_k]$, $B_1=s$, $B_2=t$, but $p$ contains a 123 pattern.  If this pattern does not involve elements from $B_1$, then deleting $B_1$ will still produce a partition that contains 123.  If this pattern does involve an element from $B_1$, that element must play the role of ``1'' in the 123 pattern.  However, since all elements of $B_2$ are less than all elements of $B_1$, the roles of ``2'' and ``3'' will be played by elements from $B_3, \dots, B_k$.  This means that even if $B_1$ were deleted, the resulting ordering partition will still contain 123, and we are done.  Since there is a bijection between the sets $\mathcal{OP}_{[b_1,\dots, b_k]}(123;[s,t])$ and $\mathcal{OP}_{[b_2,\dots,b_k]}(123;t)$, we see that \begin{equation}\text{op}_{[b_1,\dots, b_k]}(123;[s,t]) = \text{op}_{[b_2,\dots,b_k]}(123;t).\label{Eq:Rec1}\end{equation}

The second case is more complicated.  If not all elements of $s$ are larger than all elements of $t$, then there exists a 12 pattern within the first two blocks.  Let $i$ be the smallest element of $B_2$ that plays the role of ``2'' in a 12 pattern.  Further let $a$ be the number of elements of $B_1$ that are less than $i$ and let $b$ be the number of elements of $B_2$ that are less than $i$.  Because $i$ plays the role of ``2'' in a 12 pattern, we know that there are no elements larger than $i$ in blocks $B_3, \dots, B_k$.  This means that $\text{op}_{[b_1,\dots,b_k]}(123;[s,t])  = 0$ if $\min(s)<\max(t)$ and $\left([n] \setminus [a+b]\right) \not\subseteq s \cup t$.

Suppose then, that $\min(s)<\max(t)$ and $\left([n] \setminus [a+b]\right) \subseteq s \cup t$.  In this case, $i$ and all elements larger than it cannot be involved in a 123 pattern because these elements all appear in $B_1$ and $B_2$, and there are no elements of $B_3, \dots, B_k$ that are larger than $i$.  We have $\text{op}_{[b_1,\dots,b_k]}(123;[s,t]) = \text{op}_{[a,b,b_3,\dots,b_k]}(123,[s^*,t^*])$ where $s^*$ is the smallest $a$ elements of $s$ and $t^*$ is the smallest $b$ elements of $t$.  Further, since $i$ was the smallest element of $B_2$ involved in a 12 pattern, we now know that all elements of $s^*$ are larger than all elements of $t^*$, so by Equation \ref{Eq:Rec1}, we see that $\text{op}_{[a,b,b_3,\dots,b_k]}(123,[s^*,t^*])=\text{op}_{[b,b_3,\dots,b_k]}(123,t^*)$.

Together, we have that

\begin{equation}
\text{op}_{[b_1,\dots, b_k]}(123;[s,t]) = \begin{cases}
\text{op}_{[b_2,\dots,b_k]}(123;t)& \min(s)>\max(t)\\
0&\min(s)<\max(t) \text{ and }\left([n] \setminus [a+b]\right) \not\subseteq s \cup t\\
\text{op}_{[b,b_3,\dots,b_k]}(123,t^*)& \text{otherwise ($t^*$ is the $b$ least elements of $t$).}
\end{cases}
\label{Eq:Rec2}
\end{equation}

Now, Equations \ref{Eq:nullprefix}, \ref{Eq:1prefix}, \ref{Eq:Rec1} and \ref{Eq:Rec2} together with the base cases $\text{op}_{[b_1]}(123;\{1,2,\dots,b_1\})=1$ and $\text{op}_{[b_1,b_2]}(123;s)=1$ allow us to compute $\text{op}_{[b_1,\dots,b_k]}(123)$ for any list of block sizes.  This family of recurrences has been implemented in the Maple package \texttt{123scheme} can be found on the fourth author's website \url{http://faculty.valpo.edu/lpudwell/maple.html}.  Further, now that we have a way to compute $\text{op}_{[b_1,\dots,b_k]}(123)$, we may use this computation to find $\text{op}_{n,k}(123)$ and $\text{op}_n(123)$ for various values of $n$ and $k$.  Also, by the result of Section \ref{S:bij}, these values will be the same for $\text{op}_{[b_1,\dots,b_k]}(\rho)$, $\text{op}_{n,k}(\rho)$, and $\text{op}_{n}(\rho)$ where $\rho$ is any pattern of length 3.

In the next section, we use the data computed by our enumeration scheme to analyze the case of $\text{op}_{[\underbrace{2,\dots,2}_{k}]}(123)$.

\section{The case of blocks of size 2}\label{S:dominos}

In this section, we consider the case where all partition blocks are of size two.  We may, of course, use the techniques of Section \ref{S:enumerate} to compute $\text{op}_{[\underbrace{2,\dots,2}_{k}]}(123)$.  The enumeration scheme provided in that section efficiently computes $\text{op}_{[\underbrace{2,\dots,2}_{k}]}(123)$, which is sequence A220097 in the Online Encyclopedia of Integer sequences \cite{OEIS}.  However, further analysis with Zeilberger's \texttt{Maple} package \emph{FindRec} \cite{Z08a} predicts that this sequence satisfies this second order linear recurrence: 

\begin{conjecture} $\text{op}_{[\underbrace{2,\dots,2}_{k}]}(123)=$
 $$\dfrac{329k^3-749k^2+514k-96}{2k(2k+1)(7k-9)}\text{op}_{[\underbrace{2,\dots,2}_{k-1}]}(123)+\dfrac{3(14k^3-39k^2+31k-6)}{k(2k+1)(7k-9)}\text{op}_{[\underbrace{2,\dots,2}_{k-2}]}(123).$$ \end{conjecture}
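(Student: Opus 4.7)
The plan is to promote the FindRec output to a theorem by using the enumeration scheme of Section \ref{S:enumerate} as the engine of a P-recursive proof. The scheme gives a provably correct algorithm for $\text{op}_{[b_1,\dots,b_k]}(123)$, but in the specialized case where every block has size two, I expect the refined counts $\text{op}_{[\underbrace{2,\dots,2}_{k}]}(123;s)$ to collapse dramatically: by the translation-invariance of the avoidance condition under shifting the labels of the universe, the refined count should depend only on a small number of statistics of $s=\{\min(s),\max(s)\}$, most naturally the pair $(\min(s)-1,\, 2k-\max(s))$ recording how many integers sit below and above $s$. The first step is therefore to rewrite Equation (\ref{Eq:Rec2}) as a two-variable recursion for a function $f_k(i,j)$ on this pair of statistics, obtaining a finite coupled system in which all summations of Section \ref{S:enumerate} have been carried out symbolically.

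Having reduced the problem to such a finite system, my next step is to translate it into a functional equation. Introduce the bivariate generating function $F_k(x,y)=\sum_{i,j} f_k(i,j)\,x^i y^j$ and derive a kernel-type equation relating $F_k$ to $F_{k-1}$. Because the recursions of Section \ref{S:enumerate} are linear with index ranges depending polynomially on the statistics, the resulting functional equation should be linear in $F_k$ with coefficients polynomial in $x$, $y$, and $k$; by standard closure properties of D-finite power series, the evaluation $\text{op}_{[\underbrace{2,\dots,2}_{k}]}(123)=\sum_{i,j} f_k(i,j)$ should then be P-recursive in $k$ with an a priori bound on the order. Once such a bound is in hand, the conjectured recurrence can be verified on finitely many initial values computed by the enumeration scheme, in the now-standard ``guess-and-prove'' paradigm made rigorous by Kauers and Paule.

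The principal obstacle will be controlling the state space tightly enough to certify the specific bound of order two that FindRec suggests. The kernel method may a priori yield only a higher-order recurrence, from which the stated second-order relation must then be extracted by creative telescoping (Zeilberger's algorithm on the explicit sum representation of $f_k(i,j)$). If that path proves intractable, a backup plan is to seek a bijection between $\mathcal{OP}_{[\underbrace{2,\dots,2}_{k}]}(123)$ and some pair of non-crossing or non-nesting lattice-path configurations, analogous to the classical Catalan encoding of $\mathcal{S}_n(123)$; a Motzkin-style step decomposition on such paths would naturally produce a second-order recurrence, whose coefficients could be matched against the conjectured polynomials $329k^3-749k^2+514k-96$ and $3(14k^3-39k^2+31k-6)$. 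The hard part, in either approach, is not producing \emph{a} linear recurrence of low order but matching these exact cubic numerators.
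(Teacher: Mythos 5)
The first thing to say is that the paper itself offers no proof of this statement: it is stated as a conjecture, produced by Zeilberger's \emph{FindRec} package from data generated by the enumeration scheme, and the authors explicitly leave its proof as an open problem. So there is no ``paper's own proof'' to match; what you have written is a research program, and it should be judged as such. As a program it is reasonable, but it contains gaps that prevent it from being a proof, and at least one of its key reductions is likely false as stated.

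The concrete difficulty is in your first step. You assert that, by translation-invariance, the refined counts $\text{op}_{[2,\dots,2]}(123;s)$ depend only on the pair $(\min(s)-1,\,2k-\max(s))$, yielding a two-variable system $f_k(i,j)$. But the scheme's reduction in Equation (\ref{Eq:Rec2}) does not stay inside the family of all-blocks-size-two partitions: its third case replaces the block-size vector $[b_1,b_2,b_3,\dots,b_k]$ by $[b,b_3,\dots,b_k]$, where $b$ is the number of elements of $B_2$ below the critical element $i$ and can equal $1$ (or any value between $1$ and $b_2$). Iterating the scheme from $[2,2,\dots,2]$ therefore generates states with mixed block sizes, and the state is a subset $t^*$ of a relabeled ground set, not merely a pair of gap statistics. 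Your finite two-variable system does not exist in the form you describe; at best you get a system indexed by (block-size vector, prefix set), whose growth you have not controlled. Second, even granting a D-finite functional equation, the phrase ``a priori bound on the order'' is doing all the work in your guess-and-prove step: without an \emph{effective} bound on the order and degree of the minimal recurrence, checking finitely many initial values proves nothing, and deriving such a bound from a kernel equation with $k$-dependent coefficients is precisely the hard, unexecuted part. Your own closing admission --- that the difficulty is matching the exact cubic numerators --- is accurate, and it is exactly the point at which the proposal stops being a proof. The statement remains, for you as for the authors, a conjecture.
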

for $k>3$, with $\text{op}_{[2]}(123)=1$ and $\text{op}_{[2,2]}(123)=6$.  It remains an open problem to prove this recurrence describes our sequence in general.

The asymptotic behavior of this sequence resulting from this recurrence can be analyzed using the Birkhoff-Trjitzinsky method \cite{WZ85} (implemented in \emph{AsyRec}~\cite{Z08}), which predicts the behavior to be $\sim .1583\cdot 12^k \dfrac{\left(1-\frac{249}{392k}\right)}{k^{3/2}}$.

Since these experimental techniques predict the sequence $\text{op}_{[\underbrace{2,\dots,2}_{k}]}(123)$ to grow as $12^k$, it is interesting to give an explicit analysis of this case to provide bounds on the asymptotics; the lower bound of $\text{op}_{[\underbrace{2,\dots,2}_{k}]}^{1/k}(123)\ge8$ can be exhibited by elementary means as follows:  If $\pi$ is a 123-avoiding permutation in $S_{2k}$ (of which there are $\frac{{{4k}\choose{2k}}}{2k+1}\sim 16^k$), then $\pi$ corresponds to an {\it ordered} 123-avoiding partition with $k$ consecutive blocks of size 2 each, where the elements in each block are ordered.  Each of these partitions corresponds to $2^k$ {\it ordered} partitions with part sizes 2, it follows that
\[2^k\text{op}_{[\underbrace{2,\dots,2}_{k}]}(123)\ge\frac{{{4k}\choose{2k}}}{2k+1}\sim16^k,\]
which establishes the claim that $\text{op}_{[\underbrace{2,\dots,2}_{k/2}]}^{1/k}(123)\ge\sqrt{8}.$

\section{Monotonicity of $\text{op}_{n,k}(123)$}\label{S:almostperm}

The discussion in Sections \ref{S:simple} and \ref{S:n-1} certainly suggests that $\text {op}_{n,k}(123)$ is not monotone in $k$; for example, for $n=4$ the sequence $\{\text {op}_{n,k}(123):1\le k\le 4\}$ is $\{1,14,27, 14\}$, as seen by elementary counting, results from Section \ref{S:n-1}, and the fact that $C_4=14$.  In a similar vein, Theorem \ref{prodform} shows that $\text {op}_{n,n-1}(123)\sim K\cdot4^n/\sqrt{n}$, whereas it is well known that $\text {op}_{n,n}(123)=C_n\sim K'4^n/n^{3/2}$, a smaller number.  We provide below a first result aimed at understanding the monotonicity (or lack thereof) discussed above:

\begin{theorem}
$\text {op}_{n,4}(123)>\text {op}_{n,3}(123)$ for $n$ sufficiently large.
\end{theorem}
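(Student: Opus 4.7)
The plan is to prove the recursive lower bound
\[
\text{op}_{n,4}(123) \;\ge\; 2\,\text{op}_{n-1,4}(123) \;+\; 2\,\text{op}_{n-1,3}(123)
\]
by exhibiting four injections into $\mathcal{OP}_{n,4}(123)$ with pairwise disjoint images. Given $B_1/B_2/B_3 \in \mathcal{OP}_{n-1,3}(123)$, I form the two 4-block partitions $\{n\}/B_1/B_2/B_3$ and $B_1/\{n\}/B_2/B_3$; given $C_1/C_2/C_3/C_4 \in \mathcal{OP}_{n-1,4}(123)$, I form $(C_1\cup\{n\})/C_2/C_3/C_4$ and $C_1/(C_2\cup\{n\})/C_3/C_4$. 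In each of these four constructions the inserted maximum $n$ sits in either the first or the second block, so any 123 pattern using $n$ would force $n$ into the role of ``3,'' demanding two earlier pattern letters that lie in two distinct blocks strictly preceding the block containing $n$; this cannot happen, because in each construction there is at most one such earlier block. All 123 patterns not involving $n$ are inherited from the input, which is 123-avoiding by hypothesis. The four images are pairwise disjoint because they are distinguished both by which of the first two blocks contains $n$ and by whether that block is a singleton.

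Setting $f(n) = \text{op}_{n,4}(123)/2^n$ and $g(n) = \text{op}_{n,3}(123)/2^n$, the recurrence reads
\[
f(n) \;\ge\; f(n-1) + g(n-1).
\]
Theorem \ref{T:123_3blocks} gives $g(k) = (k^2+3k-16)/8 + 3\cdot 2^{-k}$, so $g(k)\sim k^2/8$. Telescoping the recurrence from any fixed $n_0$ therefore yields
\[
f(n) \;\ge\; f(n_0) + \sum_{k=n_0}^{n-1} g(k) \;=\; \Theta(n^3),
\]
so $\text{op}_{n,4}(123) = \Omega(n^3 \cdot 2^n)$, while $\text{op}_{n,3}(123) = \Theta(n^2 \cdot 2^n)$ by Theorem \ref{T:123_3blocks}. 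The ratio $\text{op}_{n,4}(123)/\text{op}_{n,3}(123)$ therefore grows at least like $n/3$, and in particular exceeds $1$ for every sufficiently large $n$.

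The only step requiring genuine care is the four-part case analysis verifying that each of the insertion maps produces a 123-avoider; everything else is routine summation. This approach is attractive precisely because it avoids the harder problem of producing a closed form or sharp asymptotic for $\text{op}_{n,4}(123)$, which does not appear to be available: the theorem only demands a cubic-versus-quadratic comparison, and the crude recurrence above delivers exactly that.
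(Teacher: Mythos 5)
Your proof is correct, and it takes a genuinely different route from the paper's. The paper argues by a partial injection: it sends each 3-block avoider having no singleton block to a 4-block avoider by splitting the maximum element off into its own block, bounds the number of 3-block partitions that do have a singleton block crudely by $3n2^{n-1}$, and then exhibits roughly $\frac{(n-5)^2}{8}2^{n-5}$ additional 4-block avoiders with no singleton block (hence outside the injection's image) to absorb that error term. You instead establish the clean recursive inequality $\text{op}_{n,4}(123)\ge 2\,\text{op}_{n-1,4}(123)+2\,\text{op}_{n-1,3}(123)$ by inserting the new maximum into one of the first two positions, either as a fresh singleton block (consuming a 3-block avoider) or into an existing block (consuming a 4-block avoider). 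The common kernel of both arguments is the same observation: a global maximum sitting in one of the first two blocks cannot serve as the ``3'' of a 123 pattern, since that would require two distinct earlier blocks. Your case analysis for 123-avoidance and your disjointness argument (which of the first two blocks contains $n$, and whether that block is a singleton) are both sound. Your route buys something the paper's does not: telescoping against the exact formula of Theorem \ref{T:123_3blocks} yields $\text{op}_{n,4}(123)=\Omega(n^3 2^n)$ versus $\text{op}_{n,3}(123)=\Theta(n^2 2^n)$, so the ratio in fact tends to infinity at rate roughly $n/3$, whereas the paper's comparison only just clears the stated inequality.
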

\begin{proof}
We start with a partial injection.  Consider a partition $p$ with three blocks $B_1/B_2/B_3$ with no block being a singleton.  If $n\in B_3$ we map $p$ to
$p':=B_1/B_2/\{n\}/B_3\setminus\{n\}$.  If $n\in B_2$, we map $p$ to $B_1/\{n\}/B_2\setminus\{n\}/B_3$, and if $n\in B_1$ we map $p$ to the 4-partition $p'=\{n\}/B_1\setminus \{n\}/ B_2/B_3$.  It is clear that each of the 4-partitions avoid 123 and that the mapping $p\hookrightarrow p'$ is an injection.  Thus the number  of 3-avoiders with each block having 2 or more elements is less than the number of 4 avoiders with one block being $\{n\}$ and up to one more singleton block.

Simple over-counting shows that the number of 3-partitions (not necessarily avoiding) with at least one singleton block is no more than $3\cdot n\cdot2^{n-1}$ (we choose the singleton in $n$ ways; place it in one of 3 positions; and then choose the two other parts by selecting any subset of the remaining elements).  It is now left to show that the number of 4-avoiders with no singleton block is at least as large as $3\cdot n\cdot2^{n-1}$.  We construct a lower bound on these by the following three-step process:

\begin{enumerate}
\renewcommand{\labelenumi}{\roman{enumi}}
\item make a 123 avoiding 3-partition of $\{3,4,....,n-3\}$ in roughly $\frac{(n-5)^2}{8}\cdot2^{n-5}$ ways (the exact number is given by Theorem 2); 

\item add $n-2$, $2$ and $1$ to the first, second and third blocks respectively;

\item define the first block to be $\{n-1,n\}$.
\end{enumerate}

This completes the proof since  
$3\cdot n\cdot2^{n-1}<\frac{(n-5)^2}{8}\cdot 2^{n-5}$  for $n$ sufficiently large.

\end{proof}
We end with a conjecture that states that monotonicity holds in a certain restricted sense.
\begin{conjecture}
For each fixed $k$, there exists $n_0(k)$ such that for each $\rho\in S_m$ and $n\ge n_0(k)$, $$\operatorname{op}_{n,k+1}(\rho)>\operatorname{op}_{n,k}(\rho)>\cdots>\operatorname{op}_{n,\vert\rho\vert}(\rho).$$
\end{conjecture}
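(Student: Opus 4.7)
The plan is to extend the injection-and-counting method from the proof of Theorem 5. For each $k\ge|\rho|$ I will construct a pattern-preserving partial injection $\Phi:\mathcal{OP}_{n,k}(\rho)\rightarrow\mathcal{OP}_{n,k+1}(\rho)$ and show that its image misses enough of $\mathcal{OP}_{n,k+1}(\rho)$ to make the inclusion strict for $n\ge n_0(k)$. Iterating the resulting strict inequality across $j=|\rho|,\ldots,k$ yields the full descending chain, with $n_0(k)$ taken as the maximum of finitely many thresholds over the (finitely many) patterns of length at most $k+1$.

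The injection $\Phi$ is defined on those $p=B_1/\cdots/B_k\in\mathcal{OP}_{n,k}(\rho)$ in which the block containing $n$ has size at least $2$. If $n\in B_j$, set $\Phi(p)=B_1/\cdots/B_{j-1}/\{n\}/(B_j\setminus\{n\})/B_{j+1}/\cdots/B_k$. I would argue that $\Phi(p)$ avoids $\rho$ whenever $\rho_m=m$: a $\rho$-occurrence in $\Phi(p)$ that avoids the new singleton $\{n\}$ already occurs in $p$, while a $\rho$-occurrence that uses $n$ must place $n$ at position $m$ of the pattern (since $n$ is the largest element), with all remaining pattern elements lying in blocks of index strictly less than $j$ in both $\Phi(p)$ and $p$, thereby producing a $\rho$-occurrence in $p$. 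The reversal and complement symmetries from Section~\ref{S:simple}, together with analogous maps placing $\{n\}$ after $B_j\setminus\{n\}$ or splitting off $1$ in place of $n$, cover every pattern whose maximum or minimum sits at position $1$ or $m$. To establish strict inequality, bound the exceptional set (those $p$ in which $\{n\}$ is already a singleton) by $k\cdot\text{op}_{n-1,k-1}(\rho)$, and show that the image of $\Phi$ misses at least $\text{op}_{n-1,k}(\rho,\rho^*)$ partitions, where $\rho^*$ is the standardization of $\rho$ with its maximum removed, by appending $\{n\}$ as a last block to any $(\rho,\rho^*)$-avoider of $[n-1]$. Provided $L_k(\rho,\rho^*)>L_{k-1}(\rho)$, where $L$ denotes the Stanley--Wilf-type growth rate established in Section 8, this gap grows without bound in $n$, delivering the desired strict inequality.

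The principal obstacle is the class of patterns $\rho$ with $|\rho|\ge4$ whose maximum and minimum both sit in the interior of $\rho$; the smallest example is $\rho=2143$. For such $\rho$ the simple ``split off $n$'' injection can introduce a new $\rho$-occurrence, since an element that formerly shared a block with $n$ in $p$ becomes separated from $n$ in $\Phi(p)$ and can complete a forbidden pattern. Handling these patterns will likely require a splitting rule adapted to the position of the maximum of $\rho$ (for instance, isolating a specific element of $B_j$ whose value relationship to the rest of $p$ matches an extremal role in $\rho$), an inductive argument on the shape of $\rho$, or an analytic approach establishing strict monotonicity of the Stanley--Wilf-type limits $L_k(\rho)$ for $k\ge|\rho|$ directly. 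A secondary technical difficulty is verifying $L_k(\rho,\rho^*)>L_{k-1}(\rho)$ uniformly across the relevant patterns, since the extra restriction imposed by $\rho^*$ could in principle reduce the growth rate enough to invalidate the appending construction and would then force us to exhibit image-complement elements by a different route.
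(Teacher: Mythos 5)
The statement you are addressing is Conjecture 2 of the paper; it is left open there. The paper proves only the single instance $\operatorname{op}_{n,4}(123)>\operatorname{op}_{n,3}(123)$ (Section \ref{S:almostperm}) and then lists the conjecture among the open questions, so there is no proof to compare against --- and your proposal does not supply one. It is a program with two holes, both of which you flag yourself and both of which are essential rather than technical. First, the split-off-$n$ injection (which is exactly the device the paper uses in its $k=3$ versus $k=4$ argument) genuinely breaks for any pattern of length at least $4$ whose maximum and minimum are both interior, such as $2143$: separating $n$ from its former blockmates can create new admissible occurrences, and no combination of reversal and complement reaches these patterns. Since the conjecture quantifies over every $\rho$ with $\vert\rho\vert\le k+1$, this class cannot be deferred; a proof restricted to patterns with an extremal entry at an end would only establish a fragment of the claim.

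Second, even for the patterns your injection does handle, the strictness step rests on the unproved inequality $L_k(\rho,\rho^*)>L_{k-1}(\rho)$ for the growth rate of partitions avoiding the pair $\lc\rho,\rho^*\rc$. Section 8 of the paper establishes only the \emph{existence} of the limits $\lim_{n\to\infty}\operatorname{op}_{n,k}^{1/n}(\rho)$ for a single pattern and fixed $k$; it gives no comparison between these limits for different values of $k$, and says nothing about simultaneous avoidance of two patterns. The comparison is also genuinely delicate: the paper's own data show $\operatorname{op}_{4,3}(123)=27>14=\operatorname{op}_{4,4}(123)$, and Theorem \ref{prodform} gives $\operatorname{op}_{n,n-1}(123)\sim K4^n/\sqrt{n}$ against $C_n\sim K'4^n/n^{3/2}$, so monotonicity fails outright at the top of the range and the needed growth-rate separations cannot be taken for granted. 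Where the paper does obtain a strict inequality (the $k=3$ to $k=4$ case for $123$), it does so by explicit counting against the closed formula of Theorem \ref{T:123_3blocks}, not by an appeal to Stanley--Wilf-type limits; you would need a comparably concrete estimate, uniform over the finitely many relevant patterns, before your counting step closes. As it stands, the conjecture remains open.
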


\section{A Stanley-Wilf Type Result}

The Stanley-–Wilf conjecture states that for every permutation $\rho\in\mathcal{S}_m$, there is a constant $C$ such that the number $|S_n(\rho)|$ of permutations of length $n$ which avoid $\rho$ is asymptotic to $C^n$.  The conjecture was first proved by Marcus and Tardos~\cite{MT04}.  We will prove a similar result for ordered partitions.  

Let $\rho\in \mathcal{S}_m$.  Define $\mathcal{OP}*_{n,k}(\rho)$ be the set of ordered $\rho$-avoiding partitions of $[n]$ with $k$ blocks, where some of the blocks may be empty.  Let $\op=\left|\mathcal{OP}*_{n,k}(\rho)\right|$.  We first prove, using Fekete's 1923 lemma for subadditive functions (see \cite{steele}), that a Stanley-Wilf~\cite{MT04} type result holds for $\op$. 
\begin{theorem}
$\displaystyle\lim_{n\to\infty}\opn$ exists as a real number in $[1,\infty)$ for each fixed $k$ and $\rho$.
\end{theorem}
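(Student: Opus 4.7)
The plan is to apply Fekete's lemma to the sequence $a_n := \log \op$, with $k$ and $\rho$ held fixed: if $(a_n)$ is subadditive, i.e., $a_{m+n} \le a_m + a_n$ for all $m, n \ge 1$, then $\lim_{n\to\infty} a_n/n$ exists and equals $\inf_n a_n/n$, and exponentiating yields the existence of $\lim_{n\to\infty} \opn$. The crux is therefore to prove the submultiplicative inequality $\opmn \le \opm \cdot \op$.

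To establish submultiplicativity I will construct an explicit injection $\Phi : \mathcal{OP}^*_{m+n,k}(\rho) \hookrightarrow \mathcal{OP}^*_{m,k}(\rho) \times \mathcal{OP}^*_{n,k}(\rho)$ by splitting a partition along the value $m$. Given $p = B_1/B_2/\cdots/B_k \in \mathcal{OP}^*_{m+n,k}(\rho)$, let $p_1$ be the partition whose $i$th block is $B_i \cap [m]$, and let $p_2$ be the partition whose $i$th block is $(B_i \cap \{m+1, \dots, m+n\}) - m$ (the elements above $m$, shifted down by $m$). Observe that these restrictions may produce empty blocks even when $p$ has none, which is precisely why the theorem is phrased in terms of $\mathcal{OP}^*$ rather than $\mathcal{OP}$. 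Both $p_1$ and $p_2$ must avoid $\rho$, since any occurrence of $\rho$ in either restriction lifts to an occurrence of $\rho$ in $p$. Moreover, the map $p \mapsto (p_1, p_2)$ is injective: block $i$ of $p$ is recovered as the union of block $i$ of $p_1$ with block $i$ of $p_2$ shifted up by $m$, so $(p_1, p_2)$ determines $p$.

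For the bounds on the limit, I supply two crude inequalities. Assuming $|\rho| \ge 2$, the partition placing all of $[n]$ into $B_1$ and leaving $B_2, \dots, B_k$ empty avoids every such pattern $\rho$, since any occurrence of $\rho$ requires elements from $|\rho| \ge 2$ distinct blocks; hence $\op \ge 1$ and $\lim_{n\to\infty} \opn \ge 1$. On the other hand, each of the $n$ elements may independently be placed in any one of the $k$ blocks, so $\op \le k^n$ and the limit is at most $k < \infty$. The main obstacle in this plan is the submultiplicativity step; while the restriction map is natural, care is needed to verify both that pattern occurrences are inherited upward from $(p_1, p_2)$ to $p$ (so that avoidance is preserved downward under $\Phi$) and that allowing empty blocks is essential for the reconstruction $p_1, p_2 \mapsto p$ to be well defined.
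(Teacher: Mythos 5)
Your proposal is correct and takes essentially the same approach as the paper: the identical splitting of a partition of $[m+n]$ into its restriction to $[m]$ and its renumbered restriction to $\{m+1,\dots,m+n\}$ within the same $k$ blocks, giving $\opmn\le\opm\cdot\op$, followed by Fekete's lemma applied to $\log\op$. The additional details you supply (explicit blockwise reconstruction for injectivity, and the bounds $1\le\op\le k^n$) are consistent with and slightly more explicit than the paper's treatment.
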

\begin{proof}  Let $\rho\in\mathcal{S}_\ell$ with $\ell\leq k$.  Fix $m,n$, and consider $\pi\in\mathcal{OP}*_{m+n,k}(\rho)$.  We shall show that $\pi$ uniquely determines a pair $(\pi_1,\pi_2)$ where $\pi_1\in \mathcal{OP}^*_{m,k}(\rho)$ and $\pi_2\in \mathcal{OP}*_{n,k}(\rho)$.  The mapping $\pi\hookrightarrow(\pi_1,\pi_2)$ that does this is the one that defines $\pi_1$ as the original partition with only the numbers $\{1,2,\ldots,m\}$ placed in the same $k$ blocks as before, and  with $\pi_2$ defined as the original partition with only the numbers $\{m+1,m+2,\ldots,m+n\}$ placed again in the same $k$ blocks as before, but renumbered as $\{1,2,\ldots,n\}$. To give an example, with $m=5; n=6$, the ordered 321-avoiding block partition $1/\emptyset/3,4,6,10/2,5/7,8/9,11$ decomposes into the two 321-avoiding parts $\pi_1=1/\emptyset/3,4/2,5/\emptyset/\emptyset$ and $\pi_2=\emptyset/\emptyset/1,5/\emptyset/2,3/4,6$.  It follows that 
\[\opmn\le\opm\cdot\op,\]
or that
\[\log\opmn\le\log\opm+\log\op,\]
which shows that the function $\log\op$ is subadditive.  Fekete's lemma (\cite{steele}) thus shows that 
\[\lim_{n\to\infty}\frac{\log\op}{n}=\inf\frac{\log\op}{n};\]
in particular $\lim_{n\to\infty}\frac{\log\op}{n}$ {\it exists} as a number in $I=[-\infty,\infty)$ since the infimum of a non-empty real sequence is always in the interval $I$.  Since in our case $\op\ge1$, it follows that $\lim_{n\to\infty}\frac{\log\op}{n}\in[0,\infty),$ and thus $\lim_{n\to\infty}{\opn}\in[1,\infty),$ as claimed.
\end{proof}

We now show that a Stanley-Wilf limit exists even when blocks are not allowed to be empty, and that the corresponding limit is the same as that for $\op$.  

\begin{theorem}\label{thmSW}
$\lim_{n\to\infty}\oopn$ exists in $[1,\infty)$ for each fixed $k$ and $\rho$.
\end{theorem}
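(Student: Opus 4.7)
The plan is to sandwich $\oopn$ between two bounds both equal to $L := \lim_{n\to\infty}\opn$, which exists by the preceding theorem. Since $\mathcal{OP}_{n,k}(\rho)\subseteq\mathcal{OP}^*_{n,k}(\rho)$, we have $\oop\le\op$, whence $\limsup_n\oopn\le L$. The substantive half is the matching lower bound $\liminf_n\oopn\ge L$, and I intend to obtain it from the single inequality
$$
\op\;\le\;2^k\,\text{op}_{n+k,k}(\rho).
$$
Once this is in hand, taking $n$-th roots and using $(2^k)^{1/n}\to 1$ together with $(n+k)/n\to 1$ transfers the inequality to $L\le\liminf_n\oopn$, and the limit is in $[1,\infty)$ (the bound $\ge 1$ being inherited from the previous theorem).

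By the reversal symmetry $\text{op}_{n,k}(\rho)=\text{op}_{n,k}(\rho^r)$ from Section~\ref{S:simple}, and because only one position of $\rho\in\mathcal{S}_m$ can hold the value $m$, I may assume $\rho_1\ne m$. Under this assumption I will construct, for each $0\le j\le k$, an injection $\iota_j:\mathcal{OP}_{n,j}(\rho)\hookrightarrow\mathcal{OP}_{n+k,k}(\rho)$ as follows. For $\pi=B_1/\cdots/B_j$ with $j<k$, set
$$
\iota_j(\pi)=\{n+k-j,\,n+k-j+1,\,\ldots,\,n+k\}\,/\,\{n+k-j-1\}\,/\,\cdots\,/\,\{n+1\}\,/\,B_1/\cdots/B_j,
$$
so the first block pools the top $j+1$ new values, the next $k-j-1$ blocks are singletons holding the remaining new values in decreasing order, and the final $j$ blocks are the original $B_i$'s. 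For $j=k$ the prefix degenerates and I instead adjoin $\{n+1,\ldots,n+k\}$ to $B_1$. Injectivity is immediate since $B_1,\ldots,B_j$ can be read off as the final $j$ blocks of the image.

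To verify that $\iota_j(\pi)$ avoids $\rho$, I argue as follows. An occurrence of $\rho$ that uses no new value lies entirely within $B_1/\cdots/B_j$, contradicting $\pi$'s avoidance. If $t\ge 1$ new values participate, they all lie in the leading $k-j$ blocks (or in block~$1$ when $j=k$), so they occupy the leftmost $t$ positions of the occurrence. A direct check shows that these $t$ selected new values form a strictly decreasing sequence when read left to right: at most one value comes from the big leading block (which holds the top $j+1$ new values, any of which dominates every singleton value), and the singleton blocks are arranged in decreasing order of value. Since every new value exceeds every old value, the new values constitute the top $t$ ranks of the occurrence, so they must realize the roles $m,m-1,\ldots,m-t+1$ in $\rho$. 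In particular $\rho_1=m$, contradicting our reduction.

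Feeding the injections $\iota_j$ into the standard decomposition
$$
\op=\sum_{j=0}^{k}\binom{k}{j}\text{op}_{n,j}(\rho),
$$
obtained by conditioning on which $j$ of the $k$ slots are nonempty, produces $\op\le 2^k\,\text{op}_{n+k,k}(\rho)$ as desired. The main obstacle I expect is the avoidance bookkeeping in the lemma: the decreasing layout of the new values across the leading blocks is engineered precisely so that, regardless of whether the big first block contributes to an occurrence or only singletons do, the selected new values sit at the top ranks in decreasing order and uniformly force $\rho_1=m$. Keeping the sub-cases $i_1=1$ and $i_1\ge 2$ straight, along with the degenerate case $j=k$, is where care is needed.
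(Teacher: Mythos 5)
Your argument is correct, and it reaches the theorem by a genuinely different route than the paper. The paper proves $\text{op}^*_{n,k}(\rho)\le (1+k)^{2k}\,\text{op}_{n,k}(\rho)$ by an injection into $\mathcal{OP}_{n,k}(\rho)\times[0,k]^{2k}$ that keeps the ground set $[n]$ fixed: it pushes the empty blocks to one end and then iteratively relocates the largest not-yet-moved elements into the empty blocks, recording the moves in a word $w$; most of the work goes into checking, stage by stage, that each relocation cannot create a copy of $\rho$. You instead use the exact decomposition $\text{op}^*_{n,k}(\rho)=\sum_{j=0}^k\binom{k}{j}\text{op}_{n,j}(\rho)$ (which the paper itself mentions, but can only exploit conditionally on its monotonicity conjecture) and replace monotonicity by the padding injection $\mathcal{OP}_{n,j}(\rho)\hookrightarrow\mathcal{OP}_{n+k,k}(\rho)$: prepend $k$ new maximal values, pooled and arranged decreasingly across the leading $k-j$ blocks, so that any occurrence of $\rho$ touching a new value would force $\rho_1=m$, excluded by the reversal reduction. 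This buys a one-paragraph avoidance check and the clean bound $\text{op}^*_{n,k}(\rho)\le 2^k\,\text{op}_{n+k,k}(\rho)$, at the harmless cost of a shift from $n$ to $n+k$, which disappears upon taking $n$th roots since the limit $L=\lim_n \text{op}^{*\,1/n}_{n,k}(\rho)$ is finite; the paper's construction avoids the shift but pays for it with the delicate iterative bookkeeping. Your verification of the injection (at most one new value from the pooled first block, the singletons decreasing, all new values dominating all old ones, hence the new values occupy the top $t$ ranks in the leftmost $t$ positions) is sound, including the degenerate case $j=k$, so no gap remains.
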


\begin{proof} Let $\rho \in S_m$.  Suppose that $m\leq k$.  We will prove the result by showing that there is a function $\phi(k)$ that satisfies $\frac{\op}{\phi(k)}\le\oop\le\op$, and $\lim_{n\rightarrow\infty}\phi(k)^{1/n}=1$.   

The second inequality is trivial.  Assume that $k\leq n$.  We prove the first inequality by providing an injection from $\mathcal{OP}^*_{n,k}(\rho)$ into $\mathcal{OP}_{n,k}(\rho)\times [0,k]^{2k}$, where $[0,k]^{2k}$ is the number of words with $2k$ letters using the alphabet $[0,k]=\{0,1,\dots,k\}$.  

Let $\pi\in\mathcal{OP}^*_{n,k}(\rho)$, and assume that $\rho$ does not end with $m$.  We construct $(\hat{\pi},w)\in\mathcal{OP}_{n,k}\times [0,k]^{2k}$ in the following way.  First, move all of the empty blocks of $\pi$ to the end of $\pi$ keeping the relative order of the nonempty blocks unchanged, and call this new partition $\pi_0$.  (Note that if $\rho$ ends with $m$ we move the empty blocks to the beginning of $\pi$ and proceed as below.)  Now, the first $k$ letters of $w$ are given by $w_i=j$ if the $i^{th}$ block of $\pi_0$ was the $j^{th}$ block of $\pi$, and $w_i=0$ if the $i^{th}$ block of $\pi_0$ is empty.  Since the relative order of the nonempty blocks of $\pi$ have not changed $\pi_0$ must be $\rho$ avoiding.

Now, suppose there are $a_1$ empty blocks in $\pi_0$.  Remove the $a_1$ largest elements of $\pi_0$ (that is the elements $n-a_1+1$ through $n$), and put one in each of the empty blocks at the end of $\pi_0$ so that they are in increasing order.  Call this new partition $\pi_1$.  

There can be no copies of $\rho$ involving only the first $k-a_1$ blocks since such a copy would have been a copy in $\pi_0$ as well.  If a copy of $\rho$ involves one of the last $a_1$ blocks, then the element in the last block used would have to represent $m$ since it would be the largest of all of the elements from $\pi_1$ used.  This is impossible, since $\rho$ does not end in $m$.  

Suppose there are $a_2$ empty blocks in $\pi_1$.  If $a_2=0$ then we are done, and we let $\pi_1=\hat{\pi}$.  If $a_2>0$ then we take the elements $n-a_1-a_2+1$ through $n-a_1$ out of their blocks and place them in the $a_2$ empty blocks in such a way that the sequence formed by their placement is order isomorphic to the sequence created by the maxima of these blocks in $\pi_0$.  Call this new partition $\pi_2$.  

As before there can be no copies of $\rho$ involving any of the last $a_1$ blocks of $\pi_2$.  Thus, the only possible copies of $\rho$ could be in the first $n-a_1$ blocks of $\pi_2$, and such a copy must involve at least one of the singleton blocks that was an empty block in $\pi_1$.  Such a copy of $\rho$ cannot only involve these formerly empty blocks of $\pi_1$ since this would imply a copy of $\rho$ in $\pi_0$.  Thus, a copy of $\rho$ must involve elements from an original block from $\pi_1$ and some of the formerly empty blocks.  

Suppose such a copy exists, call it $\sigma=\sigma_1\sigma_2\cdots\sigma_m$.  Suppose that $\sigma_{i_1}\sigma_{i_2}\dots\sigma_{i_t}$ is the subsequence of elements from the formerly empty blocks.  Replace each of these entries in $\sigma$ by the maxima of the corresponding blocks in $\pi_0$.  Call this new permutation $\tau$.  Now, $\tau$ is order isomorphic to $\sigma$ since the sequence $\sigma_{i_1}\sigma_{i_2}\dots\sigma_{i_t}$ is order isomorphic to the sequence of maxima of the corresponding blocks in $\pi_0$, and $\min\{\sigma_{i_j}:1\leq j\leq t\}>\max\{\sigma_s:s\notin\{i_j:1\leq j\leq t\}\}$.  Thus, $\tau$ is a copy of $\rho$ in $\pi_0$, which contradicts the fact that $\pi_0$ avoids $\rho$. 

Suppose we reach a partition $\pi_i$ with $a_{i+1}$ empty blocks.  We remove the largest $a_{i+1}$ that are in blocks that have not been empty at any point during the construction.  We fill the empty blocks with these $a_{i+1}$ elements by placing one in each block so that they are order isomorphic to the maxima of the corresponding blocks in $\pi_{i-1}$, thus forming $\pi_{i+1}$.  

We must show that $\pi_{i+1}$ avoids $\rho$.  Suppose $\sigma=\sigma_1\sigma_2\cdots\sigma_m$ is a copy of $\rho$ in $\pi_{i+1}$.  We know that none of the last $a_1$ blocks are involved.  Form a permutation $\tau=\tau_1\tau_2\dots\tau_m$ from $\sigma$ in the following way.  If $\sigma_i$ is in a block that has not yet been emptied then $\tau_i=\sigma_i$.  If $\sigma_i$ is in a block that was empty in $\pi_j$ then $\tau_i$ is the maximum of the corresponding block in $\pi_{j-1}$.  Now, $\tau$ is order isomorphic to $\sigma$ by an argument similar to that for $\pi_2$.  Thus, $\tau$ is a copy of $\rho$ in $\pi_0$.  

Once we reach $\pi_j$ with no empty blocks, we set $\pi_j=\hat{\pi}$.  Such a $\pi_j$ will always be obtained since we assumed that $n\geq k$.  

The final $k$ letters of the word $w$ are as follows.  Let $w_{k+i}=0$ if the $i^{th}$ block of $\hat{\pi}$ has more than one element.  Let $w_{k+i}=j$ if the $i^{th}$ block has one element and that element was in the $j^{th}$ block of $\pi_0$.  The nature of $w$ makes this process invertible and hence we have an injection.  Since $|[0,k]^{2k}|=(1+k)^{2k}$, we have that $\frac{\op}{\phi(k)}\le\oop$ where $\phi(k)=(1+k)^{2k}$.  \end{proof}

An example will certainly help make the previous construction easier to understand.  Suppose that $\rho=132$.  We have that $\pi=8/\emptyset/359/12/\emptyset/46/7\in \mathcal{OP}^*_{9,7}(\rho)$.  In the first step we move the empty blocks to the end, and obtain $\pi_0=8/359/12/46/7/\emptyset/\emptyset$.  The first seven letters of the word $w$ are 1346700 since the last 2 blocks were empty, the second block in $\pi_0$ was the third block in $\pi$, etc.  

Now, we remove 8 and 9 from $\pi_0$ and use them to fill in the empty blocks at the end by placing them in increasing order to obtain, $\pi_1=\emptyset/35/12/46/7/8/9$.  The first block has been emptied by removing the 8.  

We remove 7 from its block in $\pi_1$ and place it in the empty block.  Notice that since there is only one element to place we do not need to worry about placing it in an order isomorphic way.  This gives us $\pi_2=7/35/12/46/\emptyset/8/9$.  The fifth block has been emptied by removing the 7.  

We remove 6 from its block in $\pi_2$ and place it in the empty block obtaining $\pi_3=7/34/12/4/6/8/9$.  No blocks of $\pi_3$ are empty, so we set $\hat{\pi}=\pi_3$.  The last seven letters of the word $w$ are 5004412.  The second and third of these last seven letters are zeros since the second and third blocks of $\hat{\pi}$ have at least two elements each.  The first letter is 5 since 7 was in the fifth block of $\pi_0$ etc.  

This gives us the pair $(7/35/12/4/6/8/9,13467005004412)\in \mathcal{OP}_{9,7}(132)\times [0,k]^{2k}$.  

The proof of Theorem \ref{thmSW} can be substantially simplified if the monotonicity conjecture 2 is proved, since we would have that
$\text{op}^*(n,k) \le\sum_{j=1}^k {k\choose j} \text{op}(n,j),$ where $j$ indicates which of the $k$ blocks are to be non-empty; varying the positions of these yields all possibilities for empty ordered blocks. Thus by monotonicity,
$$\text{op}^*(n,k) \le (2^k-1)\max_{1\le j\le k} \text{op}(n,j) = (2^k-1) \text{op}(n,k),$$ for $n$ sufficiently large, which proves the first inequality in Theorem \ref{thmSW} with $\phi(k)=2^k-1$.

\section{Future work/Open questions}\label{S:future}  

We propose the following open questions for study. 
 
\begin{enumerate}
\renewcommand{\labelenumi}{\roman{enumi}}
\item Enumeration of classes of avoidance numbers $\text {op}_{n,k}(\rho)$, $k\ge4$, is certainly of critical importance, starting with the case $\vert\rho\vert=3$; 

\item We know that $\lim\text {op}_{n,3}^{1/n}(123)=2$, and $\lim\text {op}_{n,n}^{1/n}(123)=4$.  For which $k$ does, e.g., $\lim\text {op}_{n,k}^{1/n}(123)=3$?  Is $\sup_{k=k_n}\lim\text {op}_{n,k}^{1/n}(123)<\infty$?  Is $\sup_{k=k_n}\lim\text {op}_{n,k}^{1/n}(123)=4$? 

\item To what extent is Conjectures 1 and 2 true?  

\item Are the numbers $\text {op}_{n,k}(\rho)$ unimodal for $3\le k\le n$ and where do they attain their maximum?

\end{enumerate}

\end{document}